\newtheorem{assumption}{Assumption}
\newtheorem{lemma}{Lemma}
\newtheorem{proposition}{Proposition}
\newtheorem{theorem}{Theorem}
\newtheorem{definition}{Definition}
\newtheorem{corollary}[theorem]{Corollary}
\newtheorem*{remark}{Remark}
\newcommand{\E}[1]{\mathbf{E}_{}\left[#1\right]}
\newcommand{\var}[1]{{\mathbf{V}}_{}\left[#1\right]}
\def\P{{\mathbf{P}}}
\definecolor{int}{rgb}{0.8,0.8,1}
\definecolor{ext}{rgb}{0,0,1}
\definecolor{redbox}{RGB}{191,18,56} %
\definecolor{blackbox}{RGB}{0,0,0} %
\definecolor{brownbox}{RGB}{128,99,90} %
\definecolor{clair}{rgb}{0.910,0.933,0.957}
\definecolor{moyen}{rgb}{0.102,0.349,0.557}
\definecolor{fonce}{rgb}{0.067,0.231,0.369}
\definecolor{titres}{rgb}{0.137,0.466,0.741}
\definecolor{apricot}{rgb}{0.961,0.506,0.216}
\definecolor{grey}{rgb}{0.58,0.58,0.58}
\begin{document}

\pgfdeclarelayer{background layer} 
\pgfdeclarelayer{foreground layer} 
\pgfsetlayers{background layer,main,foreground layer}

\title{Random Abstract Simplicial Complexes Reduction}
\author{\IEEEauthorblockN{Ana\"\i s Vergne,
Laurent Decreusefond, and
Philippe Martins,~\IEEEmembership{Senior Member,~IEEE}}\\
\IEEEauthorblockA{LTCI, T\'el\'ecom ParisTech, Universit\'e Paris-Saclay, 75013, Paris, France}
\thanks{Manuscript created on January 13, 2017.}}

\maketitle
\begin{abstract}
  Random abstract simplicial complex representation provides a
  mathematical description of wireless networks and their topology. In
  order to reduce the energy consumption in this type of network, we
  intend to reduce the number of network nodes without modifying
  neither the connectivity nor the coverage of the network. In this
  paper, we present a reduction algorithm that lower the number of
  points of an abstract simplicial complex in an optimal order while
  maintaining its topology. Then, we study the complexity of such an
  algorithm for a network simulated by a binomial point process and
  represented by a Vietoris-Rips complex.
\end{abstract}

\begin{IEEEkeywords}
Simplicial homology; algebraic topology; reduction algorithm; point
processes; complexity; wireless networks.
\end{IEEEkeywords}

\section{Introduction}
Wireless networks are everyday more present in our lifes: WiFi is the
main internet access in our homes, cellular systems such as 4G and
soon 5G provide its access everywhere else. Moreover with IoT, every
object in our kitchen or in our bathroom will in the near future be
connected as well. The quality of service of this type of network is
primarily its connectivity and its coverage, only after checking this
first two characteristics can come the capacity of the network, that
is the number of users or connected devices a network can accept.
However whether a network of sensors is fully connected, or whether a
set of base stations does cover a whole domain, is not that easy to
determine since network nodes are often irregularly deployed. Indeed,
recent works such as~\cite{deng_ginibre_2015} or
\cite{gomez_case_2015} show that cellular networks deployment can be 
approached by random point processes going from the repulsive Ginibre
point process to the neither repulsive nor attractive Poisson point
process depending on the type of area (rural or urban) or the type of
systems (every systems or only 4G systems for example).

Algebraic topology,~\cite{hatcher_algebraic_2002}, turns out to
provide the solutions to the problem of how to compute
the topology of a random set of points. Based on the geometrical data
of the network (network nodes locations, communication or coverage
radii), it is possible to build a combinatorial object representing
it: the simplicial complex. Basically a simplicial complex is the generalization of the
concept of graph, it is made of $k$-simplices where $0$-simplices are vertices,
$1$-simplices are edges, $2$-simplices are triangles, $3$-simplices are
tetrahedron and so on. In particular, the \u{C}ech simplicial complex allows to
represent exactly the coverage of the union of the coverage disks as stated in
the Nerve lemma in~\cite{ghrist_coverage_2005}. Then algebraic
topology is a tool to compute the number of connected components, of
coverage holes, and of 3D voids, that are the so-called Betti numbers
of the simplicial complex representing the network, as detailed in
\cite{de_silva_coordinate-free_2006}. However, the computational time
to obtain the Betti numbers can explode with the size of the
simplicial complex, then it is possible to compute them in a
decentralized way as seen in~\cite{muhammad_decentralized_2007}, or
using persistent homology in~\cite{zomorodian_computing_2005 ,
  silva_coverage_2007}. 

In this paper, we present a reduction algorithm for abstract
simplicial complexes. Points, called $0$-simplices, are removed
one-by-one from an abstract simplicial complex while its topology
remain unchanged. The removal order in the algorithm is optimal for
the complexity of the abstract simplicial complex implementation and
of the algorithm. We use the Vietoris-Rips simplicial complex
built on a binomial point process for representing a wireless network
and illustrate our algorithm. A first version of this algorithm has
been presented in~\cite{vergne_reduction_2013} and it has been used
for cellular networks applications in~\cite{vergne_simplicial_2015 ,
  yan_homology-based_2015, le_simplicial_2015}. We investigate the
complexity of our reduction algorithm and show that it depends on the
size of the largest simplex of the abstract simplicial complex. We
compute its almost sure asymptotical behavior for a Vietoris-Rips
complex based on a binomial point process, in which case it is also
known as the clique number in a random geometric graph. 

This is the first reduction algorithm for abstract simplicial complexes that
uses homology to reduce the complex that we know of. Usually reduction
algorithms for simplicial complexes are used to reduce complexes
prior to the computation of their topology in order to reduce its complexity.
For example, in~\cite{dlotko_distributed_2012} and
\cite{kaczynski_homology_1998}, the authors use reduction of chain 
complexes in order to compute the homology groups and the Betti
numbers. Witness complexes of~\cite{de_silva_topological_2004} are
another example of simplicial complexes reduction: the 
simplicial complex is reduced to a given number of vertices in order
to compute the various topological invariants, such as the Betti
numbers. So reduction of a simplicial complex has been used in order
to compute its topology, we intend to do the opposite: reduction of
the simplicial complex becomes the aim, while the homology computation
is the mean to do it. The reduction problem can also be seen as a
dominating graph problem~\cite{haynes_fundamentals_1998}. However,
since there is no notion of coverage in graphs, algorithms for the
dominating graph problem do not maintain the topology of the initial
simplicial complex. Our problem has also been studied under a
game-theoretic approach in~\cite{campos-nanez_game-theoretic_2008},
where the authors define a coverage function. But they can only
identify sub-optimal solutions that do not guarantee an unmodified
coverage.

 When computing the complexity of the algorithm, we focus on
 Vietoris-Rips complexes based on binomial point processes that fall
 into the class of random geometric complexes. There exists known
 results for this class of complexes~\cite{bobrowski_topology_2014},
 especially the moments of the number of $k$-simplices are explicitly
 known~\cite{decreusefond_simplicial_2014}. In the end, we are reduced
 to compute the behavior of the size of the largest simplex, which is
 known as the clique number in graph theory. The clique number of the
 random geometric graph has been heavily studied in the literature, and its
behavior described according to percolation regimes in
\cite{penrose_random_2003}. In~\cite{goel_monotone_2005}, it is
proved that monotone properties of random geometric graph have sharp
thresholds. Hence, in~\cite{penrose_focusing_2002 ,
  muller_two-point_2008}, the authors prove that, in the subcritical
regime, the clique number becomes concentrated on two consecutive
integers. Moreover, in the subcritical regime, weak laws of large
numbers~\cite{penrose_weak_2003} and central limits theorems
\cite{penrose_central_2001} have been found for some functionals,
including the clique number. Then for the supercritical regime,
in\cite{appel_maximum_1997}, the authors described the behavior of the clique
number. In this paper, we intend to gather all these results in one
place: we provide the almost sure asymptotical behavior of the clique
number of the random geometric graph for every of the three regimes
thanks to the exact formulas of the moments of the number of
$k$-simplices computed in~\cite{decreusefond_simplicial_2014}.

The remainder of the paper is organized as followed. First in Section
\ref{sec_prel}, we remind some simplicial homology and algebraic
topology definitions and properties. Then the reduction
algorithm and its properties are described in Section
\ref{sec_ra}. The complexity of the algorithm is investigated for a
random set of points in Section~\ref{sec_comp}. Finally we conclude in
Section~\ref{sec_ccl}.

\section{Mathematical background}
\label{sec_prel}
\subsection{Simplicial Homology}
Considering a set of points representing network nodes, the first idea
to apprehend the topology of the network would be to look at the
neighbors graph: if the distance between two points is less than a
given parameter then an edge is drawn between them. An example of a
neighbors graph can be seen in Fig.~\ref{fig_neighbor}. However this
representation is too limited to transpose the network's
topology. First, only $2$-by-$2$ relationships are represented in the
graph, there is no way to grasp interactions between three or more
nodes. Moreover, there is no concept of coverage in a graph. That is
why we are interested in more complex objects.
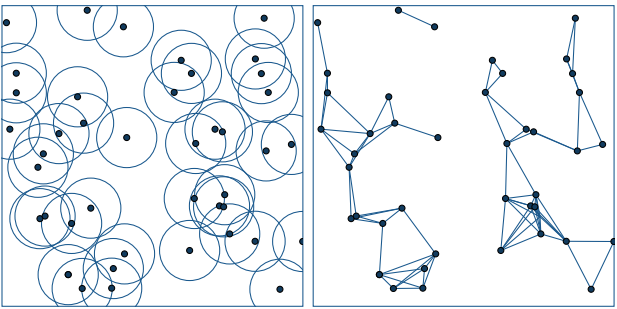
\begin{figure}[h]
  \centering
\begin{tikzpicture}[scale=4]
\draw  [draw=moyen] (0,0) rectangle (1,1); 
\useasboundingbox (0,0) rectangle (1,1);
\coordinate (x1) at (0.9238,0.0566);
\coordinate (x2) at (0.7228,0.3340);
\coordinate (x3) at (0.2711,0.6092);
\coordinate (x4) at (0.0474,0.7752);
\coordinate (x5) at (0.2200,0.1053);
\coordinate (x6) at (0.4072,0.1743);
\coordinate (x7) at (0.2512,0.6968);
\coordinate (x8) at (0.5722,0.7114);
\coordinate (x9) at (0.1261,0.2915);
\coordinate (x10) at (0.8779,0.5168);
\coordinate (x11) at (0.2832,0.9846);
\coordinate (x12) at (0.7325,0.5803);
\coordinate (x13) at (0.7567,0.2407);
\coordinate (x14) at (0.0475,0.7101);
\coordinate (x15) at (0.8411,0.2162);
\coordinate (x16) at (0.9994,0.2155);
\coordinate (x17) at (0.1194,0.4624);
\coordinate (x18) at (0.1374,0.5071);
\coordinate (x19) at (0.8713,0.9582);
\coordinate (x20) at (0.0148,0.9434);
\coordinate (x21) at (0.2313,0.2756);
\coordinate (x22) at (0.3645,0.0602);
\coordinate (x23) at (0.7080,0.5890);
\coordinate (x24) at (0.1893,0.5744);
\coordinate (x25) at (0.4145,0.5612);
\coordinate (x26) at (0.3700,0.1255);
\coordinate (x27) at (0.6392,0.3585);
\coordinate (x28) at (0.2951,0.3264);
\coordinate (x29) at (0.7366,0.3309);
\coordinate (x30) at (0.8620,0.7739);
\coordinate (x31) at (0.9619,0.5384);
\coordinate (x32) at (0.1431,0.3001);
\coordinate (x33) at (0.5956,0.8182);
\coordinate (x34) at (0.0261,0.5891);
\coordinate (x35) at (0.6296,0.7746);
\coordinate (x36) at (0.4035,0.9296);
\coordinate (x37) at (0.8850,0.7112);
\coordinate (x38) at (0.7402,0.3712);
\coordinate (x39) at (0.6438,0.5415);
\coordinate (x40) at (0.6236,0.1856);
\coordinate (x41) at (0.2666,0.0592);
\coordinate (x42) at (0.8421,0.8227);
\coordinate (x43) at (0.7540,0.2975);
\begin{pgfonlayer}{foreground layer}
\clip (0,0) rectangle (1,1); 
\draw [fill=fonce] (x1) circle (0.01cm);
\draw [fill=fonce] (x2) circle (0.01cm);
\draw [fill=fonce] (x3) circle (0.01cm);
\draw [fill=fonce] (x4) circle (0.01cm);
\draw [fill=fonce] (x6) circle (0.01cm);
\draw [fill=fonce] (x9) circle (0.01cm);
\draw [fill=fonce] (x10) circle (0.01cm);
\draw [fill=fonce] (x11) circle (0.01cm);
\draw [fill=fonce] (x14) circle (0.01cm);
\draw [fill=fonce] (x15) circle (0.01cm);
\draw [fill=fonce] (x16) circle (0.01cm);
\draw [fill=fonce] (x17) circle (0.01cm);
\draw [fill=fonce] (x18) circle (0.01cm);
\draw [fill=fonce] (x20) circle (0.01cm);
\draw [fill=fonce] (x21) circle (0.01cm);
\draw [fill=fonce] (x22) circle (0.01cm);
\draw [fill=fonce] (x23) circle (0.01cm);
\draw [fill=fonce] (x24) circle (0.01cm);
\draw [fill=fonce] (x25) circle (0.01cm);
\draw [fill=fonce] (x27) circle (0.01cm);
\draw [fill=fonce] (x28) circle (0.01cm);
\draw [fill=fonce] (x30) circle (0.01cm);
\draw [fill=fonce] (x33) circle (0.01cm);
\draw [fill=fonce] (x35) circle (0.01cm);
\draw [fill=fonce] (x36) circle (0.01cm);
\draw [fill=fonce] (x37) circle (0.01cm);
\draw [fill=fonce] (x38) circle (0.01cm);
\draw [fill=fonce] (x39) circle (0.01cm);
\draw [fill=fonce] (x40) circle (0.01cm);
\draw [fill=fonce] (x41) circle (0.01cm);
\draw [fill=fonce] (x42) circle (0.01cm);
\draw [fill=fonce] (x5) circle (0.01cm);
\draw [fill=fonce] (x5) circle (0.01cm);
\draw [fill=fonce] (x19) circle (0.01cm);
\draw [fill=fonce] (x29) circle (0.01cm);
\draw [fill=fonce] (x13) circle (0.01cm);
\draw [fill=fonce] (x7) circle (0.01cm);
\draw [fill=fonce] (x8) circle (0.01cm);
\draw [fill=fonce] (x31) circle (0.01cm);
\draw [fill=fonce] (x32) circle (0.01cm);
\draw [fill=fonce] (x26) circle (0.01cm);
\draw [fill=fonce] (x34) circle (0.01cm);
\draw [fill=fonce] (x12) circle (0.01cm);
\end{pgfonlayer}{foreground layer}
\begin{pgfonlayer}{background layer}
\clip (0,0) rectangle (1,1); 
\draw [color=moyen] (x1) circle (0.1cm);
\draw [color=moyen]  (x2) circle (0.1cm);
\draw [color=moyen]  (x3) circle (0.1cm);
\draw [color=moyen] (x4) circle (0.1cm);
\draw [color=moyen] (x5) circle (0.1cm);
\draw [color=moyen] (x6) circle (  0.1cm);
\draw [color=moyen]  (x7) circle ( 0.1cm);
\draw [color=moyen] (x8) circle ( 0.1cm);
\draw [color=moyen] (x9) circle ( 0.1cm);
\draw [color=moyen]  (x10) circle ( 0.1cm);
\draw [color=moyen]  (x11) circle ( 0.1cm);
\draw [color=moyen] (x12) circle (0.1cm);
\draw [color=moyen] (x13) circle ( 0.1cm);
\draw [color=moyen] (x14) circle (  0.1cm);
\draw [color=moyen]  (x15) circle ( 0.1cm);
\draw [color=moyen]  (x16) circle ( 0.1cm);
\draw [color=moyen]  (x17) circle ( 0.1cm);
\draw [color=moyen]  (x18) circle ( 0.1cm);
\draw [color=moyen]  (x19) circle ( 0.1cm);
\draw [color=moyen]  (x20) circle ( 0.1cm);
\draw [color=moyen] (x21) circle ( 0.1cm);
\draw [color=moyen] (x22) circle ( 0.1cm);
\draw [color=moyen]  (x23) circle ( 0.1cm);
\draw [color=moyen]  (x24) circle (  0.1cm);
\draw [color=moyen]  (x25) circle (  0.1cm);
\draw [color=moyen] (x26) circle (  0.1cm);
\draw [color=moyen]  (x27) circle ( 0.1cm);
\draw [color=moyen]  (x28) circle (  0.1cm);
\draw [color=moyen] (x29) circle ( 0.1cm);
\draw [color=moyen] (x30) circle ( 0.1cm);
\draw [color=moyen] (x31) circle ( 0.1cm);
\draw [color=moyen] (x32) circle ( 0.1cm);
\draw [color=moyen] (x33) circle ( 0.1cm);
\draw [color=moyen] (x34) circle (  0.1cm);
\draw [color=moyen]  (x35) circle ( 0.1cm);
\draw [color=moyen]  (x36) circle ( 0.1cm);
\draw [color=moyen] (x37) circle ( 0.1cm);
\draw [color=moyen]  (x38) circle ( 0.1cm);
\draw [color=moyen]  (x39) circle ( 0.1cm);
\draw [color=moyen]  (x40) circle ( 0.1cm);
\draw [color=moyen]  (x41) circle ( 0.1cm);
\draw [color=moyen]  (x42) circle ( 0.1cm);
\end{pgfonlayer}{background layer}
\end{tikzpicture} 
\begin{tikzpicture}[scale=4]
\draw  [draw=moyen] (0,0) rectangle (1,1); 
\useasboundingbox (0,0) rectangle (1,1);
\coordinate (x1) at (0.9238,0.0566);
\coordinate (x2) at (0.7228,0.3340);
\coordinate (x3) at (0.2711,0.6092);
\coordinate (x4) at (0.0474,0.7752);
\coordinate (x5) at (0.2200,0.1053);
\coordinate (x6) at (0.4072,0.1743);
\coordinate (x7) at (0.2512,0.6968);
\coordinate (x8) at (0.5722,0.7114);
\coordinate (x9) at (0.1261,0.2915);
\coordinate (x10) at (0.8779,0.5168);
\coordinate (x11) at (0.2832,0.9846);
\coordinate (x12) at (0.7325,0.5803);
\coordinate (x13) at (0.7567,0.2407);
\coordinate (x14) at (0.0475,0.7101);
\coordinate (x15) at (0.8411,0.2162);
\coordinate (x16) at (0.9994,0.2155);
\coordinate (x17) at (0.1194,0.4624);
\coordinate (x18) at (0.1374,0.5071);
\coordinate (x19) at (0.8713,0.9582);
\coordinate (x20) at (0.0148,0.9434);
\coordinate (x21) at (0.2313,0.2756);
\coordinate (x22) at (0.3645,0.0602);
\coordinate (x23) at (0.7080,0.5890);
\coordinate (x24) at (0.1893,0.5744);
\coordinate (x25) at (0.4145,0.5612);
\coordinate (x26) at (0.3700,0.1255);
\coordinate (x27) at (0.6392,0.3585);
\coordinate (x28) at (0.2951,0.3264);
\coordinate (x29) at (0.7366,0.3309);
\coordinate (x30) at (0.8620,0.7739);
\coordinate (x31) at (0.9619,0.5384);
\coordinate (x32) at (0.1431,0.3001);
\coordinate (x33) at (0.5956,0.8182);
\coordinate (x34) at (0.0261,0.5891);
\coordinate (x35) at (0.6296,0.7746);
\coordinate (x36) at (0.4035,0.9296);
\coordinate (x37) at (0.8850,0.7112);
\coordinate (x38) at (0.7402,0.3712);
\coordinate (x39) at (0.6438,0.5415);
\coordinate (x40) at (0.6236,0.1856);
\coordinate (x41) at (0.2666,0.0592);
\coordinate (x42) at (0.8421,0.8227);
\coordinate (x43) at (0.7540,0.2975);
\begin{pgfonlayer}{foreground layer}
\clip (0,0) rectangle (1,1); 
\draw [fill=fonce] (x1) circle (0.01cm);
\draw [fill=fonce] (x2) circle (0.01cm);
\draw [fill=fonce] (x3) circle (0.01cm);
\draw [fill=fonce] (x4) circle (0.01cm);
\draw [fill=fonce] (x6) circle (0.01cm);
\draw [fill=fonce] (x9) circle (0.01cm);
\draw [fill=fonce] (x10) circle (0.01cm);
\draw [fill=fonce] (x11) circle (0.01cm);
\draw [fill=fonce] (x14) circle (0.01cm);
\draw [fill=fonce] (x15) circle (0.01cm);
\draw [fill=fonce] (x16) circle (0.01cm);
\draw [fill=fonce] (x17) circle (0.01cm);
\draw [fill=fonce] (x18) circle (0.01cm);
\draw [fill=fonce] (x20) circle (0.01cm);
\draw [fill=fonce] (x21) circle (0.01cm);
\draw [fill=fonce] (x22) circle (0.01cm);
\draw [fill=fonce] (x23) circle (0.01cm);
\draw [fill=fonce] (x24) circle (0.01cm);
\draw [fill=fonce] (x25) circle (0.01cm);
\draw [fill=fonce] (x27) circle (0.01cm);
\draw [fill=fonce] (x28) circle (0.01cm);
\draw [fill=fonce] (x30) circle (0.01cm);
\draw [fill=fonce] (x33) circle (0.01cm);
\draw [fill=fonce] (x35) circle (0.01cm);
\draw [fill=fonce] (x36) circle (0.01cm);
\draw [fill=fonce] (x37) circle (0.01cm);
\draw [fill=fonce] (x38) circle (0.01cm);
\draw [fill=fonce] (x39) circle (0.01cm);
\draw [fill=fonce] (x40) circle (0.01cm);
\draw [fill=fonce] (x41) circle (0.01cm);
\draw [fill=fonce] (x42) circle (0.01cm);
\draw [fill=fonce] (x5) circle (0.01cm);
\draw [fill=fonce] (x5) circle (0.01cm);
\draw [fill=fonce] (x19) circle (0.01cm);
\draw [fill=fonce] (x29) circle (0.01cm);
\draw [fill=fonce] (x13) circle (0.01cm);
\draw [fill=fonce] (x7) circle (0.01cm);
\draw [fill=fonce] (x8) circle (0.01cm);
\draw [fill=fonce] (x31) circle (0.01cm);
\draw [fill=fonce] (x32) circle (0.01cm);
\draw [fill=fonce] (x26) circle (0.01cm);
\draw [fill=fonce] (x34) circle (0.01cm);
\draw [fill=fonce] (x12) circle (0.01cm);
\end{pgfonlayer}{foreground layer}
\begin{pgfonlayer}{background layer}
\clip (0,0) rectangle (1,1); 
\draw [color=moyen]  (x1)--(x15);
\draw [color=moyen]  (x1)--(x16);
\draw [color=moyen]  (x2)--(x13);
\draw [color=moyen]  (x2)--(x15);
\draw [color=moyen]  (x2)--(x27);
\draw [color=moyen]  (x2)--(x29);
\draw [color=moyen]  (x2)--(x38);
\draw [color=moyen]  (x2)--(x40);
\draw [color=moyen]  (x2)--(x43);
\draw [color=moyen]  (x3)--(x7);
\draw [color=moyen]  (x3)--(x18);
\draw [color=moyen]  (x3)--(x24);
\draw [color=moyen]  (x3)--(x25);
\draw [color=moyen]  (x4)--(x14);
\draw [color=moyen]  (x4)--(x20);
\draw [color=moyen]  (x4)--(x34);
\draw [color=moyen]  (x5)--(x6);
\draw [color=moyen]  (x5)--(x21);
\draw [color=moyen]  (x5)--(x22);
\draw [color=moyen]  (x5)--(x26);
\draw [color=moyen]  (x5)--(x41);
\draw [color=moyen]  (x6)--(x22);
\draw [color=moyen]  (x6)--(x26);
\draw [color=moyen]  (x6)--(x28);
\draw [color=moyen]  (x6)--(x41);
\draw [color=moyen]  (x7)--(x24);
\draw [color=moyen]  (x8)--(x23);
\draw [color=moyen]  (x8)--(x33);
\draw [color=moyen]  (x8)--(x35);
\draw [color=moyen]  (x8)--(x39);
\draw [color=moyen]  (x9)--(x17);
\draw [color=moyen]  (x9)--(x21);
\draw [color=moyen]  (x9)--(x28);
\draw [color=moyen]  (x9)--(x32);
\draw [color=moyen]  (x10)--(x12);
\draw [color=moyen]  (x10)--(x23);
\draw [color=moyen]  (x10)--(x31);
\draw [color=moyen]  (x10)--(x37);
\draw [color=moyen]  (x11)--(x36);
\draw [color=moyen]  (x12)--(x23);
\draw [color=moyen]  (x12)--(x39);
\draw [color=moyen]  (x13)--(x15);
\draw [color=moyen]  (x13)--(x27);
\draw [color=moyen]  (x13)--(x29);
\draw [color=moyen]  (x13)--(x38);
\draw [color=moyen]  (x13)--(x40);
\draw [color=moyen]  (x13)--(x43);
\draw [color=moyen]  (x14)--(x24);
\draw [color=moyen]  (x14)--(x34);
\draw [color=moyen]  (x15)--(x16);
\draw [color=moyen]  (x15)--(x29);
\draw [color=moyen]  (x15)--(x38);
\draw [color=moyen]  (x15)--(x43);
\draw [color=moyen]  (x17)--(x18);
\draw [color=moyen]  (x17)--(x24);
\draw [color=moyen]  (x17)--(x32);
\draw [color=moyen]  (x17)--(x34);
\draw [color=moyen]  (x18)--(x24);
\draw [color=moyen]  (x18)--(x34);
\draw [color=moyen]  (x19)--(x30);
\draw [color=moyen]  (x19)--(x42);
\draw [color=moyen]  (x21)--(x28);
\draw [color=moyen]  (x21)--(x32);
\draw [color=moyen]  (x22)--(x26);
\draw [color=moyen]  (x22)--(x41);
\draw [color=moyen]  (x23)--(x39);
\draw [color=moyen]  (x24)--(x34);
\draw [color=moyen]  (x26)--(x41);
\draw [color=moyen]  (x27)--(x29);
\draw [color=moyen]  (x27)--(x38);
\draw [color=moyen]  (x27)--(x39);
\draw [color=moyen]  (x27)--(x40);
\draw [color=moyen]  (x27)--(x43);
\draw [color=moyen]  (x28)--(x32);
\draw [color=moyen]  (x29)--(x38);
\draw [color=moyen]  (x29)--(x40);
\draw [color=moyen]  (x29)--(x43);
\draw [color=moyen]  (x30)--(x37);
\draw [color=moyen]  (x30)--(x42);
\draw [color=moyen]  (x31)--(x37);
\draw [color=moyen]  (x33)--(x35);
\draw [color=moyen]  (x37)--(x42);
\draw [color=moyen]  (x38)--(x39);
\draw [color=moyen]  (x38)--(x43);
\draw [color=moyen]  (x40)--(x43);
\end{pgfonlayer}{background layer}
\end{tikzpicture}
\caption{A wireless network and its neighbors graph representation.}
\label{fig_neighbor} 
\end{figure}

Indeed, graphs can be generalized to more generic combinatorial
objects known as simplicial complexes. While graphs model binary
relations, simplicial complexes can represent higher order
relations. A simplicial complex is thus a combinatorial object made up
of vertices, edges, triangles, tetrahedra, and their $n$-dimensional
counterparts. 

Given a set of vertices $X$ and an integer $k$, a $k$-simplex is an
unordered subset of $k+1$ vertices $\{x_0,xÒ_1,\dots, x_k\}$ where $x_i\in
X, \forall i \in \{0,\dots,k\}$ and $x_i\not=x_j$ for all
$i\not=j$. Thus, a $0$-simplex is a vertex, a $1$-simplex an edge, a
$2$-simplex a triangle, a $3$-simplex a tetrahedron, etc. See
Fig.~\ref{fig_simplices} for instance. 

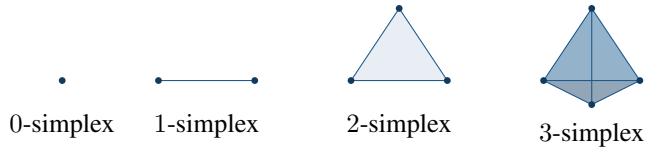
\begin{figure}[h]
  \centering
    \begin{tikzpicture}[scale=0.64]
\fill [color=fonce] (0,0) circle (2pt);
\node [below] at (0,-0.5) {$0$-simplex};
\draw [color=moyen] (2,0)--(4,0);
\fill [color=fonce] (2,0) circle (2pt);
\fill [color=fonce] (4,0) circle (2pt);
\node [below] at (3,-0.5) {$1$-simplex};
\fill [color=clair] (6,0)--(8,0)--(7,1.5);
\draw [color=moyen] (6,0)--(8,0)--(7,1.5)--(6,0);
\fill [color=fonce] (6,0) circle (2pt);
\fill [color=fonce] (8,0) circle (2pt);
\fill [color=fonce] (7,1.5) circle (2pt);
\node [below] at (7,-0.5) {$2$-simplex};
\fill [color=moyen, opacity=1] (10,0)--(12,0)--(11,1.5);
\fill [color=fonce,opacity=1] (10,0)--(12,0)--(11,-0.5);
\fill [color=clair,opacity=0.6] (10,0)--(11,-0.5)--(11,1.5);
\fill [color=clair,opacity=0.6] (11,-0.5)--(12,0)--(11,1.5);
\draw [color=moyen] (10,0)--(12,0)--(11,1.5)--(10,0);
\draw [color=moyen] (10,0)--(11,-0.5)--(11,1.5);
\draw [color=moyen] (12,0)--(11,-0.5);
\fill [color=fonce] (10,0) circle (2pt);
\fill [color=fonce] (12,0) circle (2pt);
\fill [color=fonce] (11,1.5) circle (2pt);
\fill [color=fonce] (11,-0.5) circle (2pt);
\node [below] at (11,-0.7) {$3$-simplex};
    \end{tikzpicture}
  \caption{Examples of $k$-simplices}\label{fig_simplices}
\end{figure}

Any subset of vertices included in the set of the $k+1$ vertices of a
$k$-simplex is a face of this $k$-simplex. A $k$-face is then a face
that is a $k$-simplex. Thus, a $k$-simplex has
exactly $k+1$ $(k-1)$-faces, which are $(k-1)$-simplices. For example,
a tetrahedron has four $3$-faces which are triangles. The inverse
notion of face is coface: if a simplex $S_1$ is a face of a larger
simplex $S_2$, then $S_2$ is a coface of $S_1$. As for faces, a
$k$-coface is a coface that is a $k$-simplex.

A simplicial complex is a collection of simplices which is closed with respect to
the inclusion of faces, i.e.\ all faces of a simplex are in the set of
simplices, and whenever two simplices intersect, they do so on a
common face. An abstract simplicial complex is a purely combinatorial
description of the geometric simplicial complex and therefore does not
need the property of intersection of faces. In this article, we are
only interested in the combinatorial description of a simplicial
complex, that is why we will only consider abstract simplicial
complexes, even if the adjective ``abstract'' may sometimes be
dropped. Let us denote by $x_0,x_1,\dots$ some vertices, and then
write $[x_0,\dots,x_k]$ a $k$-simplex for any $k$ integer. An example
of  an abstract simplicial complex with five $0$-simplices
$x_0,\dots, x_4$, six $1$-simplices
$[x_0,x_1],[x_0,x_2],[x_1,x_2],[x_1,x_4],[x_2,x_3],[x_3,x_4]$, and one $2$-simplex $[x_0,x_1,x_2]$
can be seen in Fig.~\ref{fig_complex}.

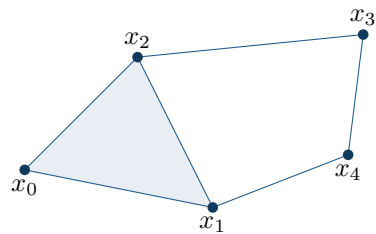
\begin{figure}[h]
  \centering
  \begin{tikzpicture}
      \fill [color=clair] (-1.5,0)--(1,-0.5)--(0,1.5); 
\draw [color=moyen] (-1.5,0)--(1,-0.5)--(0,1.5)--(-1.5,0); 
\draw [color=moyen] (1,-0.5)--(2.8,0.2)--(3,1.8)--(0,1.5); 
\node [below] at (-1.5,0) {$x_0$}; 
\node [below] at (1,-0.5) {$x_1$}; 
\node [above] at (0,1.5) {$x_2$}; 
\node [above] at (3,1.8) {$x_3$}; 
\node [below] at (2.8,0.2) {$x_4$};
\fill [color=fonce] (-1.5,0) circle (2pt);
\fill [color=fonce] (1,-0.5) circle (2pt);
\fill [color=fonce] (0,1.5) circle (2pt);
\fill [color=fonce] (3,1.8) circle (2pt);
\fill [color=fonce] (2.8,0.2) circle (2pt);
    \end{tikzpicture}
  \caption{Example of an abstract simplicial complex}\label{fig_complex}
\end{figure}

The abstract simplicial complex that can exactly represent the
topology of a wireless network (see the Nerve lemma in
\cite{ghrist_coverage_2005}) is the \u{C}ech complex whose definition
is: 
\begin{definition}[\u{C}ech complex]
Let $(X,d)$ be a metric space, $\omega$ a finite set of points in
  $X$, and $r$ a real positive number.
The \u{C}ech complex of parameter $r$ on the set of vertices $\omega$, denoted
$\mathcal{C}_{r}(\omega)$, is the abstract simplicial complex whose
$k$-simplices are the unordered $(k+1)$-tuples of vertices in $\omega$
for which the intersection of the $k+1$ balls of radius $r$ centered
at the $k+1$ vertices is non empty.
\end{definition}

However, the \u{C}ech complex can be difficult to build since one must
know the intersection of every three balls. Therefore, we are
interested in the approximation of the \u{C}ech complex, the
Vietoris-Rips complex:
\begin{definition}[Vietoris-Rips complex]
 Let $(X,d)$ be a metric space, $\omega$ a finite set of points in
  $X$, and $r$ a real positive number. The Vietoris-Rips
  complex of parameter $r$ of $\omega$, denoted
  $\mathcal{R}_{r}(\omega)$, is the abstract simplicial complex
  whose $k$-simplices correspond to the unordered $(k+1)$-tuples of
  vertices in $\omega$ which are pairwise within distance less than
  $2r$ of each other.
\end{definition}

The Vietoris-Rips complex is easier to build than the \u{C}ech complex
since it is build only based on the neighbors graph information, and
it provides a good approximation for the network's topology if the
network nodes are deployed according to a Poisson point process
\cite{yan_accuracy_2012}. An example of a Vietoris-Rips complex can be
seen in Fig.~\ref{fig_rips}.

\begin{figure}[h]
  \centering
\begin{tikzpicture}[scale=4]
\draw  [draw=moyen] (0,0) rectangle (1,1); 
\useasboundingbox (0,0) rectangle (1,1);
\coordinate (x1) at (0.9238,0.0566);
\coordinate (x2) at (0.7228,0.3340);
\coordinate (x3) at (0.2711,0.6092);
\coordinate (x4) at (0.0474,0.7752);
\coordinate (x5) at (0.2200,0.1053);
\coordinate (x6) at (0.4072,0.1743);
\coordinate (x7) at (0.2512,0.6968);
\coordinate (x8) at (0.5722,0.7114);
\coordinate (x9) at (0.1261,0.2915);
\coordinate (x10) at (0.8779,0.5168);
\coordinate (x11) at (0.2832,0.9846);
\coordinate (x12) at (0.7325,0.5803);
\coordinate (x13) at (0.7567,0.2407);
\coordinate (x14) at (0.0475,0.7101);
\coordinate (x15) at (0.8411,0.2162);
\coordinate (x16) at (0.9994,0.2155);
\coordinate (x17) at (0.1194,0.4624);
\coordinate (x18) at (0.1374,0.5071);
\coordinate (x19) at (0.8713,0.9582);
\coordinate (x20) at (0.0148,0.9434);
\coordinate (x21) at (0.2313,0.2756);
\coordinate (x22) at (0.3645,0.0602);
\coordinate (x23) at (0.7080,0.5890);
\coordinate (x24) at (0.1893,0.5744);
\coordinate (x25) at (0.4145,0.5612);
\coordinate (x26) at (0.3700,0.1255);
\coordinate (x27) at (0.6392,0.3585);
\coordinate (x28) at (0.2951,0.3264);
\coordinate (x29) at (0.7366,0.3309);
\coordinate (x30) at (0.8620,0.7739);
\coordinate (x31) at (0.9619,0.5384);
\coordinate (x32) at (0.1431,0.3001);
\coordinate (x33) at (0.5956,0.8182);
\coordinate (x34) at (0.0261,0.5891);
\coordinate (x35) at (0.6296,0.7746);
\coordinate (x36) at (0.4035,0.9296);
\coordinate (x37) at (0.8850,0.7112);
\coordinate (x38) at (0.7402,0.3712);
\coordinate (x39) at (0.6438,0.5415);
\coordinate (x40) at (0.6236,0.1856);
\coordinate (x41) at (0.2666,0.0592);
\coordinate (x42) at (0.8421,0.8227);
\coordinate (x43) at (0.7540,0.2975);
\begin{pgfonlayer}{foreground layer}
\clip (0,0) rectangle (1,1); 
\draw [fill=fonce] (x1) circle (0.01cm);
\draw [fill=fonce] (x2) circle (0.01cm);
\draw [fill=fonce] (x3) circle (0.01cm);
\draw [fill=fonce] (x4) circle (0.01cm);
\draw [fill=fonce] (x6) circle (0.01cm);
\draw [fill=fonce] (x9) circle (0.01cm);
\draw [fill=fonce] (x10) circle (0.01cm);
\draw [fill=fonce] (x11) circle (0.01cm);
\draw [fill=fonce] (x14) circle (0.01cm);
\draw [fill=fonce] (x15) circle (0.01cm);
\draw [fill=fonce] (x16) circle (0.01cm);
\draw [fill=fonce] (x17) circle (0.01cm);
\draw [fill=fonce] (x18) circle (0.01cm);
\draw [fill=fonce] (x20) circle (0.01cm);
\draw [fill=fonce] (x21) circle (0.01cm);
\draw [fill=fonce] (x22) circle (0.01cm);
\draw [fill=fonce] (x23) circle (0.01cm);
\draw [fill=fonce] (x24) circle (0.01cm);
\draw [fill=fonce] (x25) circle (0.01cm);
\draw [fill=fonce] (x27) circle (0.01cm);
\draw [fill=fonce] (x28) circle (0.01cm);
\draw [fill=fonce] (x30) circle (0.01cm);
\draw [fill=fonce] (x33) circle (0.01cm);
\draw [fill=fonce] (x35) circle (0.01cm);
\draw [fill=fonce] (x36) circle (0.01cm);
\draw [fill=fonce] (x37) circle (0.01cm);
\draw [fill=fonce] (x38) circle (0.01cm);
\draw [fill=fonce] (x39) circle (0.01cm);
\draw [fill=fonce] (x40) circle (0.01cm);
\draw [fill=fonce] (x41) circle (0.01cm);
\draw [fill=fonce] (x42) circle (0.01cm);
\draw [fill=fonce] (x5) circle (0.01cm);
\draw [fill=fonce] (x5) circle (0.01cm);
\draw [fill=fonce] (x19) circle (0.01cm);
\draw [fill=fonce] (x29) circle (0.01cm);
\draw [fill=fonce] (x13) circle (0.01cm);
\draw [fill=fonce] (x7) circle (0.01cm);
\draw [fill=fonce] (x8) circle (0.01cm);
\draw [fill=fonce] (x31) circle (0.01cm);
\draw [fill=fonce] (x32) circle (0.01cm);
\draw [fill=fonce] (x26) circle (0.01cm);
\draw [fill=fonce] (x34) circle (0.01cm);
\draw [fill=fonce] (x12) circle (0.01cm);
\end{pgfonlayer}{foreground layer}
\begin{pgfonlayer}{background layer}
\clip (0,0) rectangle (1,1); 
\draw [fill,color=clair,opacity=0.5] (x1) circle (0.1cm);
\draw [fill,color=clair,opacity=0.5] (x2) circle (0.1cm);
\draw [fill,color=clair,opacity=0.5] (x3) circle (0.1cm);
\draw [fill,color=clair,opacity=0.5] (x4) circle (0.1cm);
\draw [fill,color=clair,opacity=0.5] (x5) circle (0.1cm);
  \draw [fill,color=clair,opacity=0.5] (x6) circle (  0.1cm);
   \draw [fill,color=clair,opacity=0.5] (x7) circle ( 0.1cm);
   \draw [fill,color=clair,opacity=0.5] (x8) circle ( 0.1cm);
   \draw [fill,color=clair,opacity=0.5] (x9) circle ( 0.1cm);
   \draw [fill,color=clair,opacity=0.5] (x10) circle ( 0.1cm);
   \draw [fill,color=clair,opacity=0.5] (x11) circle ( 0.1cm);
    \draw [fill,color=clair,opacity=0.5] (x12) circle (0.1cm);
   \draw [fill,color=clair,opacity=0.5] (x13) circle ( 0.1cm);
  \draw [fill,color=clair,opacity=0.5] (x14) circle (  0.1cm);
   \draw [fill,color=clair,opacity=0.5] (x15) circle ( 0.1cm);
   \draw [fill,color=clair,opacity=0.5] (x16) circle ( 0.1cm);
   \draw [fill,color=clair,opacity=0.5] (x17) circle ( 0.1cm);
   \draw [fill,color=clair,opacity=0.5] (x18) circle ( 0.1cm);
   \draw [fill,color=clair,opacity=0.5] (x19) circle ( 0.1cm);
   \draw [fill,color=clair,opacity=0.5] (x20) circle ( 0.1cm);
   \draw [fill,color=clair,opacity=0.5] (x21) circle ( 0.1cm);
   \draw [fill,color=clair,opacity=0.5] (x22) circle ( 0.1cm);
   \draw [fill,color=clair,opacity=0.5] (x23) circle ( 0.1cm);
   \draw [fill,color=clair,opacity=0.5] (x24) circle (  0.1cm);
   \draw [fill,color=clair,opacity=0.5] (x25) circle (  0.1cm);
   \draw [fill,color=clair,opacity=0.5] (x26) circle (  0.1cm);
   \draw [fill,color=clair,opacity=0.5] (x27) circle ( 0.1cm);
   \draw [fill,color=clair,opacity=0.5] (x28) circle (  0.1cm);
\draw [fill,color=clair,opacity=0.5] (x29) circle ( 0.1cm);
\draw [fill,color=clair,opacity=0.5] (x30) circle ( 0.1cm);
\draw [fill,color=clair,opacity=0.5] (x31) circle ( 0.1cm);
\draw [fill,color=clair,opacity=0.5] (x32) circle ( 0.1cm);
 \draw [fill,color=clair,opacity=0.5] (x33) circle ( 0.1cm);
 \draw [fill,color=clair,opacity=0.5] (x34) circle (  0.1cm);
\draw [fill,color=clair,opacity=0.5] (x35) circle ( 0.1cm);
\draw [fill,color=clair,opacity=0.5] (x36) circle ( 0.1cm);
\draw [fill,color=clair,opacity=0.5] (x37) circle ( 0.1cm);
\draw [fill,color=clair,opacity=0.5] (x38) circle ( 0.1cm);
\draw [fill,color=clair,opacity=0.5] (x39) circle ( 0.1cm);
\draw [fill,color=clair,opacity=0.5] (x40) circle ( 0.1cm);
\draw [fill,color=clair,opacity=0.5] (x41) circle ( 0.1cm);
\draw [fill,color=clair,opacity=0.5] (x42) circle ( 0.1cm);
\draw [color=moyen] (x1) circle (0.1cm);
\draw [color=moyen]  (x2) circle (0.1cm);
\draw [color=moyen]  (x3) circle (0.1cm);
\draw [color=moyen] (x4) circle (0.1cm);
\draw [color=moyen] (x5) circle (0.1cm);
\draw [color=moyen] (x6) circle (  0.1cm);
\draw [color=moyen]  (x7) circle ( 0.1cm);
\draw [color=moyen] (x8) circle ( 0.1cm);
\draw [color=moyen] (x9) circle ( 0.1cm);
\draw [color=moyen]  (x10) circle ( 0.1cm);
\draw [color=moyen]  (x11) circle ( 0.1cm);
\draw [color=moyen] (x12) circle (0.1cm);
\draw [color=moyen] (x13) circle ( 0.1cm);
\draw [color=moyen] (x14) circle (  0.1cm);
\draw [color=moyen]  (x15) circle ( 0.1cm);
\draw [color=moyen]  (x16) circle ( 0.1cm);
\draw [color=moyen]  (x17) circle ( 0.1cm);
\draw [color=moyen]  (x18) circle ( 0.1cm);
\draw [color=moyen]  (x19) circle ( 0.1cm);
\draw [color=moyen]  (x20) circle ( 0.1cm);
\draw [color=moyen] (x21) circle ( 0.1cm);
\draw [color=moyen] (x22) circle ( 0.1cm);
\draw [color=moyen]  (x23) circle ( 0.1cm);
\draw [color=moyen]  (x24) circle (  0.1cm);
\draw [color=moyen]  (x25) circle (  0.1cm);
\draw [color=moyen] (x26) circle (  0.1cm);
\draw [color=moyen]  (x27) circle ( 0.1cm);
\draw [color=moyen]  (x28) circle (  0.1cm);
\draw [color=moyen] (x29) circle ( 0.1cm);
\draw [color=moyen] (x30) circle ( 0.1cm);
\draw [color=moyen] (x31) circle ( 0.1cm);
\draw [color=moyen] (x32) circle ( 0.1cm);
\draw [color=moyen] (x33) circle ( 0.1cm);
\draw [color=moyen] (x34) circle (  0.1cm);
\draw [color=moyen]  (x35) circle ( 0.1cm);
\draw [color=moyen]  (x36) circle ( 0.1cm);
\draw [color=moyen] (x37) circle ( 0.1cm);
\draw [color=moyen]  (x38) circle ( 0.1cm);
\draw [color=moyen]  (x39) circle ( 0.1cm);
\draw [color=moyen]  (x40) circle ( 0.1cm);
\draw [color=moyen]  (x41) circle ( 0.1cm);
\draw [color=moyen]  (x42) circle ( 0.1cm);
\end{pgfonlayer}{background layer}
\end{tikzpicture} 
\begin{tikzpicture}[scale=4]
\draw  [draw=moyen] (0,0) rectangle (1,1); 
\useasboundingbox (0,0) rectangle (1,1);
\coordinate (x1) at (0.9238,0.0566);
\coordinate (x2) at (0.7228,0.3340);
\coordinate (x3) at (0.2711,0.6092);
\coordinate (x4) at (0.0474,0.7752);
\coordinate (x5) at (0.2200,0.1053);
\coordinate (x6) at (0.4072,0.1743);
\coordinate (x7) at (0.2512,0.6968);
\coordinate (x8) at (0.5722,0.7114);
\coordinate (x9) at (0.1261,0.2915);
\coordinate (x10) at (0.8779,0.5168);
\coordinate (x11) at (0.2832,0.9846);
\coordinate (x12) at (0.7325,0.5803);
\coordinate (x13) at (0.7567,0.2407);
\coordinate (x14) at (0.0475,0.7101);
\coordinate (x15) at (0.8411,0.2162);
\coordinate (x16) at (0.9994,0.2155);
\coordinate (x17) at (0.1194,0.4624);
\coordinate (x18) at (0.1374,0.5071);
\coordinate (x19) at (0.8713,0.9582);
\coordinate (x20) at (0.0148,0.9434);
\coordinate (x21) at (0.2313,0.2756);
\coordinate (x22) at (0.3645,0.0602);
\coordinate (x23) at (0.7080,0.5890);
\coordinate (x24) at (0.1893,0.5744);
\coordinate (x25) at (0.4145,0.5612);
\coordinate (x26) at (0.3700,0.1255);
\coordinate (x27) at (0.6392,0.3585);
\coordinate (x28) at (0.2951,0.3264);
\coordinate (x29) at (0.7366,0.3309);
\coordinate (x30) at (0.8620,0.7739);
\coordinate (x31) at (0.9619,0.5384);
\coordinate (x32) at (0.1431,0.3001);
\coordinate (x33) at (0.5956,0.8182);
\coordinate (x34) at (0.0261,0.5891);
\coordinate (x35) at (0.6296,0.7746);
\coordinate (x36) at (0.4035,0.9296);
\coordinate (x37) at (0.8850,0.7112);
\coordinate (x38) at (0.7402,0.3712);
\coordinate (x39) at (0.6438,0.5415);
\coordinate (x40) at (0.6236,0.1856);
\coordinate (x41) at (0.2666,0.0592);
\coordinate (x42) at (0.8421,0.8227);
\coordinate (x43) at (0.7540,0.2975);
\begin{pgfonlayer}{foreground layer}
\clip (0,0) rectangle (1,1); 
\draw [fill=fonce] (x1) circle (0.01cm);
\draw [fill=fonce] (x2) circle (0.01cm);
\draw [fill=fonce] (x3) circle (0.01cm);
\draw [fill=fonce] (x4) circle (0.01cm);
\draw [fill=fonce] (x6) circle (0.01cm);
\draw [fill=fonce] (x9) circle (0.01cm);
\draw [fill=fonce] (x10) circle (0.01cm);
\draw [fill=fonce] (x11) circle (0.01cm);
\draw [fill=fonce] (x14) circle (0.01cm);
\draw [fill=fonce] (x15) circle (0.01cm);
\draw [fill=fonce] (x16) circle (0.01cm);
\draw [fill=fonce] (x17) circle (0.01cm);
\draw [fill=fonce] (x18) circle (0.01cm);
\draw [fill=fonce] (x20) circle (0.01cm);
\draw [fill=fonce] (x21) circle (0.01cm);
\draw [fill=fonce] (x22) circle (0.01cm);
\draw [fill=fonce] (x23) circle (0.01cm);
\draw [fill=fonce] (x24) circle (0.01cm);
\draw [fill=fonce] (x25) circle (0.01cm);
\draw [fill=fonce] (x27) circle (0.01cm);
\draw [fill=fonce] (x28) circle (0.01cm);
\draw [fill=fonce] (x30) circle (0.01cm);
\draw [fill=fonce] (x33) circle (0.01cm);
\draw [fill=fonce] (x35) circle (0.01cm);
\draw [fill=fonce] (x36) circle (0.01cm);
\draw [fill=fonce] (x37) circle (0.01cm);
\draw [fill=fonce] (x38) circle (0.01cm);
\draw [fill=fonce] (x39) circle (0.01cm);
\draw [fill=fonce] (x40) circle (0.01cm);
\draw [fill=fonce] (x41) circle (0.01cm);
\draw [fill=fonce] (x42) circle (0.01cm);
\draw [fill=fonce] (x5) circle (0.01cm);
\draw [fill=fonce] (x5) circle (0.01cm);
\draw [fill=fonce] (x19) circle (0.01cm);
\draw [fill=fonce] (x29) circle (0.01cm);
\draw [fill=fonce] (x13) circle (0.01cm);
\draw [fill=fonce] (x7) circle (0.01cm);
\draw [fill=fonce] (x8) circle (0.01cm);
\draw [fill=fonce] (x31) circle (0.01cm);
\draw [fill=fonce] (x32) circle (0.01cm);
\draw [fill=fonce] (x26) circle (0.01cm);
\draw [fill=fonce] (x34) circle (0.01cm);
\draw [fill=fonce] (x12) circle (0.01cm);
\end{pgfonlayer}{foreground layer}
\begin{pgfonlayer}{background layer}
\clip (0,0) rectangle (1,1); 
\draw [fill,color=clair,opacity=0.5] (x1)--(x15)--(x16);
\draw [fill,color=clair,opacity=0.5] (x2)--(x13)--(x15);
\draw [fill,color=clair,opacity=0.5] (x2)--(x13)--(x27);
\draw [fill,color=clair,opacity=0.5] (x2)--(x13)--(x29);
\draw [fill,color=clair,opacity=0.5] (x2)--(x13)--(x38);
\draw [fill,color=clair,opacity=0.5] (x2)--(x13)--(x40);
\draw [fill,color=clair,opacity=0.5] (x2)--(x13)--(x43);
\draw [fill,color=clair,opacity=0.5] (x2)--(x15)--(x29);
\draw [fill,color=clair,opacity=0.5] (x2)--(x15)--(x38);
\draw [fill,color=clair,opacity=0.5] (x2)--(x15)--(x43);
\draw [fill,color=clair,opacity=0.5] (x2)--(x27)--(x29);
\draw [fill,color=clair,opacity=0.5] (x2)--(x27)--(x38);
\draw [fill,color=clair,opacity=0.5] (x2)--(x27)--(x40);
\draw [fill,color=clair,opacity=0.5] (x2)--(x27)--(x43);
\draw [fill,color=clair,opacity=0.5] (x2)--(x29)--(x38);
\draw [fill,color=clair,opacity=0.5] (x2)--(x29)--(x40);
\draw [fill,color=clair,opacity=0.5] (x2)--(x29)--(x43);
\draw [fill,color=clair,opacity=0.5] (x2)--(x38)--(x43);
\draw [fill,color=clair,opacity=0.5] (x2)--(x40)--(x43);
\draw [fill,color=clair,opacity=0.5] (x3)--(x7)--(x24);
\draw [fill,color=clair,opacity=0.5] (x3)--(x18)--(x24);
\draw [fill,color=clair,opacity=0.5] (x4)--(x14)--(x34);
\draw [fill,color=clair,opacity=0.5] (x5)--(x6)--(x22);
\draw [fill,color=clair,opacity=0.5] (x5)--(x6)--(x26);
\draw [fill,color=clair,opacity=0.5] (x5)--(x6)--(x41);
\draw [fill,color=clair,opacity=0.5] (x5)--(x22)--(x26);
\draw [fill,color=clair,opacity=0.5] (x5)--(x22)--(x41);
\draw [fill,color=clair,opacity=0.5] (x5)--(x26)--(x41);
\draw [fill,color=clair,opacity=0.5] (x6)--(x22)--(x26);
\draw [fill,color=clair,opacity=0.5] (x6)--(x22)--(x41);
\draw [fill,color=clair,opacity=0.5] (x6)--(x26)--(x41);
\draw [fill,color=clair,opacity=0.5] (x8)--(x23)--(x39);
\draw [fill,color=clair,opacity=0.5] (x8)--(x33)--(x35);
\draw [fill,color=clair,opacity=0.5] (x9)--(x17)--(x32);
\draw [fill,color=clair,opacity=0.5] (x9)--(x21)--(x28);
\draw [fill,color=clair,opacity=0.5] (x9)--(x21)--(x32);
\draw [fill,color=clair,opacity=0.5] (x9)--(x28)--(x32);
\draw [fill,color=clair,opacity=0.5] (x10)--(x12)--(x23);
\draw [fill,color=clair,opacity=0.5] (x10)--(x31)--(x37);
\draw [fill,color=clair,opacity=0.5] (x12)--(x23)--(x39);
\draw [fill,color=clair,opacity=0.5] (x13)--(x15)--(x29);
\draw [fill,color=clair,opacity=0.5] (x13)--(x15)--(x38);
\draw [fill,color=clair,opacity=0.5] (x13)--(x15)--(x43);
\draw [fill,color=clair,opacity=0.5] (x13)--(x27)--(x29);
\draw [fill,color=clair,opacity=0.5] (x13)--(x27)--(x38);
\draw [fill,color=clair,opacity=0.5] (x13)--(x27)--(x40);
\draw [fill,color=clair,opacity=0.5] (x13)--(x27)--(x43);
\draw [fill,color=clair,opacity=0.5] (x13)--(x29)--(x38);
\draw [fill,color=clair,opacity=0.5] (x13)--(x29)--(x40);
\draw [fill,color=clair,opacity=0.5] (x13)--(x29)--(x43);
\draw [fill,color=clair,opacity=0.5] (x13)--(x38)--(x43);
\draw [fill,color=clair,opacity=0.5] (x13)--(x40)--(x43);
\draw [fill,color=clair,opacity=0.5] (x14)--(x24)--(x34);
\draw [fill,color=clair,opacity=0.5] (x15)--(x29)--(x38);
\draw [fill,color=clair,opacity=0.5] (x15)--(x29)--(x43);
\draw [fill,color=clair,opacity=0.5] (x15)--(x38)--(x43);
\draw [fill,color=clair,opacity=0.5] (x17)--(x18)--(x24);
\draw [fill,color=clair,opacity=0.5] (x17)--(x18)--(x34);
\draw [fill,color=clair,opacity=0.5] (x17)--(x24)--(x34);
\draw [fill,color=clair,opacity=0.5] (x18)--(x24)--(x34);
\draw [fill,color=clair,opacity=0.5] (x19)--(x30)--(x42);
\draw [fill,color=clair,opacity=0.5] (x21)--(x28)--(x32);
\draw [fill,color=clair,opacity=0.5] (x22)--(x26)--(x41);
\draw [fill,color=clair,opacity=0.5] (x27)--(x29)--(x38);
\draw [fill,color=clair,opacity=0.5] (x27)--(x29)--(x40);
\draw [fill,color=clair,opacity=0.5] (x27)--(x29)--(x43);
\draw [fill,color=clair,opacity=0.5] (x27)--(x38)--(x39);
\draw [fill,color=clair,opacity=0.5] (x27)--(x38)--(x43);
\draw [fill,color=clair,opacity=0.5] (x27)--(x40)--(x43);
\draw [fill,color=clair,opacity=0.5] (x29)--(x38)--(x43);
\draw [fill,color=clair,opacity=0.5] (x29)--(x40)--(x43);
\draw [fill,color=clair,opacity=0.5] (x30)--(x37)--(x42);
\draw [color=moyen]  (x1)--(x15);
\draw [color=moyen]  (x1)--(x16);
\draw [color=moyen]  (x2)--(x13);
\draw [color=moyen]  (x2)--(x15);
\draw [color=moyen]  (x2)--(x27);
\draw [color=moyen]  (x2)--(x29);
\draw [color=moyen]  (x2)--(x38);
\draw [color=moyen]  (x2)--(x40);
\draw [color=moyen]  (x2)--(x43);
\draw [color=moyen]  (x3)--(x7);
\draw [color=moyen]  (x3)--(x18);
\draw [color=moyen]  (x3)--(x24);
\draw [color=moyen]  (x3)--(x25);
\draw [color=moyen]  (x4)--(x14);
\draw [color=moyen]  (x4)--(x20);
\draw [color=moyen]  (x4)--(x34);
\draw [color=moyen]  (x5)--(x6);
\draw [color=moyen]  (x5)--(x21);
\draw [color=moyen]  (x5)--(x22);
\draw [color=moyen]  (x5)--(x26);
\draw [color=moyen]  (x5)--(x41);
\draw [color=moyen]  (x6)--(x22);
\draw [color=moyen]  (x6)--(x26);
\draw [color=moyen]  (x6)--(x28);
\draw [color=moyen]  (x6)--(x41);
\draw [color=moyen]  (x7)--(x24);
\draw [color=moyen]  (x8)--(x23);
\draw [color=moyen]  (x8)--(x33);
\draw [color=moyen]  (x8)--(x35);
\draw [color=moyen]  (x8)--(x39);
\draw [color=moyen]  (x9)--(x17);
\draw [color=moyen]  (x9)--(x21);
\draw [color=moyen]  (x9)--(x28);
\draw [color=moyen]  (x9)--(x32);
\draw [color=moyen]  (x10)--(x12);
\draw [color=moyen]  (x10)--(x23);
\draw [color=moyen]  (x10)--(x31);
\draw [color=moyen]  (x10)--(x37);
\draw [color=moyen]  (x11)--(x36);
\draw [color=moyen]  (x12)--(x23);
\draw [color=moyen]  (x12)--(x39);
\draw [color=moyen]  (x13)--(x15);
\draw [color=moyen]  (x13)--(x27);
\draw [color=moyen]  (x13)--(x29);
\draw [color=moyen]  (x13)--(x38);
\draw [color=moyen]  (x13)--(x40);
\draw [color=moyen]  (x13)--(x43);
\draw [color=moyen]  (x14)--(x24);
\draw [color=moyen]  (x14)--(x34);
\draw [color=moyen]  (x15)--(x16);
\draw [color=moyen]  (x15)--(x29);
\draw [color=moyen]  (x15)--(x38);
\draw [color=moyen]  (x15)--(x43);
\draw [color=moyen]  (x17)--(x18);
\draw [color=moyen]  (x17)--(x24);
\draw [color=moyen]  (x17)--(x32);
\draw [color=moyen]  (x17)--(x34);
\draw [color=moyen]  (x18)--(x24);
\draw [color=moyen]  (x18)--(x34);
\draw [color=moyen]  (x19)--(x30);
\draw [color=moyen]  (x19)--(x42);
\draw [color=moyen]  (x21)--(x28);
\draw [color=moyen]  (x21)--(x32);
\draw [color=moyen]  (x22)--(x26);
\draw [color=moyen]  (x22)--(x41);
\draw [color=moyen]  (x23)--(x39);
\draw [color=moyen]  (x24)--(x34);
\draw [color=moyen]  (x26)--(x41);
\draw [color=moyen]  (x27)--(x29);
\draw [color=moyen]  (x27)--(x38);
\draw [color=moyen]  (x27)--(x39);
\draw [color=moyen]  (x27)--(x40);
\draw [color=moyen]  (x27)--(x43);
\draw [color=moyen]  (x28)--(x32);
\draw [color=moyen]  (x29)--(x38);
\draw [color=moyen]  (x29)--(x40);
\draw [color=moyen]  (x29)--(x43);
\draw [color=moyen]  (x30)--(x37);
\draw [color=moyen]  (x30)--(x42);
\draw [color=moyen]  (x31)--(x37);
\draw [color=moyen]  (x33)--(x35);
\draw [color=moyen]  (x37)--(x42);
\draw [color=moyen]  (x38)--(x39);
\draw [color=moyen]  (x38)--(x43);
\draw [color=moyen]  (x40)--(x43);
\end{pgfonlayer}{background layer}
\end{tikzpicture}
\caption{A wireless network and its Vietoris-Rips complex.}
\label{fig_rips} 
\end{figure}
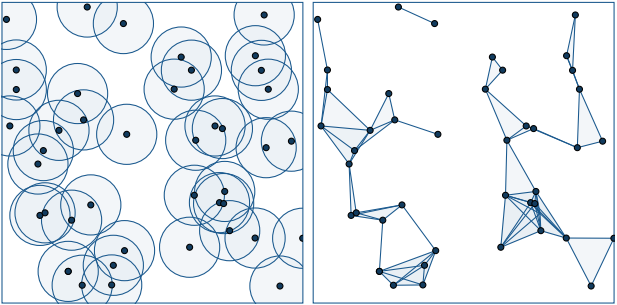

\subsection{Algebraic Topology}
Given an abstract simplicial complex, one can define an orientation on
the simplices by defining an order on the vertices, where a change in the
orientation corresponds to a change in the sign:
\begin{figure}[h]
 \begin{tikzpicture}[scale=0.7]
\coordinate (x1) at (0,0);
\coordinate (x2) at (2,0);
\coordinate (x3) at (0,-2);
\coordinate (x4) at (1,-1);
\coordinate (x5) at (2,-2);
 \fill [color=clair] (x3)--(x4)--(x5);
\draw[draw=moyen, arrows={-triangle 45}] (x1)--(x2);
\draw[draw=moyen](x3)--(x4);
\draw[draw=moyen](x4)--(x5);
\draw[draw=moyen] (x5)--(x3);
\draw[->][color=fonce] (1,-1.8) arc (270:-30:.2);
\draw [fill=fonce] (x1) circle (2pt);
\draw [fill=fonce] (x2) circle (2pt);
\draw [fill=fonce] (x3) circle (2pt);
\draw [fill=fonce] (x4) circle (2pt);
\draw [fill=fonce] (x5) circle (2pt);
\node [below] at (x1) {$x_0$};
\node [below] at (x2) {$x_1$};
\node [below] at (x3) {$x_0$};
\node [above] at (x4) {$x_1$};
\node [below] at (x5) {$x_2$};
\node [right] at (3,0) {$[x_0,x_1]=-[x_1,x_0]$};
\node [right] at (3,-2) {$[x_0,x_1,x_2]=-[x_0,x_2,x_1]$};
 \end{tikzpicture}
\end{figure}

Then a change of sign corresponds to a swap between two vertices:
\begin{align*}
  [x_0,\dots,x_i,\dots,x_j,\dots,&x_k]=\\
&-[x_0,\dots,x_j,\dots,x_i,\dots,x_k].
\end{align*}

Then let us define the vector spaces of the $k$-simplices of a
simplicial complex:
\begin{definition}
  Let $S$ be an abstract simplicial complex.

 For any integer $k$,
  $\mathscr{C}_k(S)$ is the vector space spanned by the set of oriented
  $k$-simplices of $S$.
\end{definition}

Then we can define a boundary map on these vector spaces:
\begin{definition}
Let $S$ be an abstract simplicial complex and $\mathscr{C}_k(S)$ the
vector space of its $k$-simplices for any $k$ integer.

The boundary map $\partial_k$ is defined as the linear
  transformation $\partial_k: \mathscr{C}_k(S)\rightarrow
  \mathscr{C}_{k-1}(S)$ which acts on the 
  basis elements $[x_0,\dots,x_k]$ of $\mathscr{C}_k(S)$ via:
  \begin{eqnarray*}
    \partial_k [x_0,\dots,x_k]= \sum_{i=0}^{k}{(-1)}^i [x_0,\dots,x_{i-1},x_{i+1},\dots,x_k].
  \end{eqnarray*}
\end{definition}

For example, for a $2$-simplex we have:
\begin{figure}[h]
\centering 
\begin{tikzpicture}[scale=0.7]
\coordinate (x1) at (0,0);
\coordinate (x2) at (1,1);
\coordinate (x3) at (2,0);
\coordinate (x4) at (4,0);
\coordinate (x5) at (5,1);
\coordinate (x6) at (6,0);
 \fill [color=clair] (x1)--(x2)--(x3);
\draw[draw=moyen] (x1)--(x2);
\draw[draw=moyen](x2)--(x3);
\draw[draw=moyen](x3)--(x1);
\draw[draw=moyen, arrows={-triangle 45}] (x4)->(x5);
\draw[draw=moyen, arrows={-triangle 45}] (x5)->(x6);
\draw[draw=moyen, arrows={-triangle 45}] (x6)->(x4);
\draw[->][color=fonce] (1,.2) arc (270:-30:.2);
\draw [fill=fonce] (x1) circle (2pt);
\draw [fill=fonce] (x2) circle (2pt);
\draw [fill=fonce] (x3) circle (2pt);
\draw [fill=fonce] (x4) circle (2pt);
\draw [fill=fonce] (x5) circle (2pt);
\draw [fill=fonce] (x6) circle (2pt);
\node [below] at (x1) {$x_0$};
\node [above] at (x2) {$x_1$};
\node [below] at (x3) {$x_2$};
\node [below] at (x4) {$x_0$};
\node [above] at (x5) {$x_1$};
\node [below] at (x6) {$x_2$};
\node [below] at (1,-0.5) {\small $\partial_2([x_0,x_1,x_2])$};
\node [below] at (3,-0.75) {\small $=$};
\node [below] at (6,-0.5) {\small $[x_1,x_2]-[x_0,x_2]+[x_0,x_1]$};
 \end{tikzpicture}
\end{figure}

The boundary map on any $k$-simplex, is the cycle of its $(k-1)$-faces.
This map gives rise to a chain complex (a sequence of vector spaces
and linear transformations):
\begin{eqnarray*}
  \ldots \!\stackrel{\partial_{k+2}}{\longrightarrow}C_{k+1}
  \stackrel{\partial_{k+1}}{\longrightarrow}C_{k}
  \stackrel{\partial_{k}}{\longrightarrow}C_{k-1}
  \stackrel{\partial_{k-1}}{\longrightarrow}
  \ldots\!
  \stackrel{\partial_1}{\longrightarrow}C_{0}
  \stackrel{\partial_0}{\longrightarrow}0. 
\end{eqnarray*}

We can see on our previous abstract simplicial complex example of
Fig.~\ref{fig_complex} the computation of the three first boundary
maps:
\begin{displaymath}
  \partial_0=
  \bordermatrix{
    ~ & [x_0]&[x_1]&[x_2]&[x_3]&[x_4]\cr
   & 0&0&0&0&0\cr
  }
\end{displaymath}
\begin{displaymath}
 \partial_1\!=\!\!
  \bordermatrix{
     ~ & [x_0x_1]\!\!\!&[x_0x_2]\!\!\!&[x_1x_2]\!\!\!&[x_1x_4]\!\!\!&[x_2x_3]\!\!\!&[x_3x_4]\!\!\! \cr
    [x_0]&-1&-1&0&0&0&0\cr
    [x_1]& 1&0&-1&-1&0&0\cr
    [x_2]& 0&1&1&0&-1&0\cr
    [x_3]& 0&0&0&0&1&-1\cr
    [x_4]& 0&0&0&1&0&1\cr
  }
\end{displaymath}
\begin{displaymath}
 \partial_2=
  \bordermatrix{
    ~&[x_0,x_1,x_2]\cr
    [x_0,x_1]&1\cr
    [x_0,x_2]&-1\cr
    [x_1,x_2]&1\cr
    [x_1,x_4]&0\cr
    [x_2,x_3]&0\cr
    [x_3,x_4]&0\cr
  }
\end{displaymath}

As its name indicates, the boundary map applied to a linear
combination of simplices gives its boundary. The boundary of a
boundary is the null application. Therefore this theorem can be easily
demonstrated (see~\cite{hatcher_algebraic_2002} for instance):
\begin{theorem}
  For any $k$ integer, 
$$\partial_k \circ\partial_{k+1}=0.$$
\end{theorem}

Let $S$ be an abstract simplicial complex. Then we can denote 
the $k$-th boundary group of $S$ as $B_k(S)=\mathrm{im}
  \, \partial_{k+1}$, and the $k$-th cycle group of $S$ as
  $Z_k(S)=\ker \partial_{k}$. Then, we have $B_k(S)\subset Z_k(S)$

We are now able to define the $k$-th homology group:
\begin{definition}
  The $k$-th homology group of an abstract simplicial complex $S$ is
  the quotient vector space: 
  \begin{eqnarray*}
    H_k(S)=\frac{Z_k(S)}{B_k(S)}.
  \end{eqnarray*}
\end{definition}

Then its dimension is:
\begin{definition}
  The $k$-th Betti number of the abstract simplicial complex $S$ is: 
  \begin{eqnarray*}
    \beta_k(S)=\dim H_k(S).
  \end{eqnarray*}
\end{definition}

According to its definition, the $k$-th Betti number counts the number
of cycles of $k$-simplices that are not boundaries of
$(k+1)$-simplices, that are the $k$-th dimensional holes. In small
dimensions,  they have a geometrical interpretation:
\begin{itemize}
\item $\beta_0$ is the number of connected components,
\item $\beta_1$ is the number of coverage holes,
\item $\beta_2$ is the number of $3$D-voids.
\end{itemize}
For any $k\geq d$ where $d$ is the dimension, we can note that $\beta_k=0$.

We can compute the Betti numbers of the abstract simplicial complex of
Fig.~\ref{fig_complex}:
\begin{eqnarray*}
  \beta_0&=& \dim \ker \partial_0 - \dim \mathrm{im} \, \partial_1\\
  &=&1\\
 \beta_1&=& \dim \ker \partial_1 - \dim \mathrm{im} \, \partial_2\\
  &=& 1.
\end{eqnarray*}
This complex indeed has one connected component and one coverage hole
with $4$ sides.

For further reading on algebraic topology, see
\cite{hatcher_algebraic_2002}. 

\section{Reduction Algorithm}
\label{sec_ra}
In wireless networks, redundancy is frequent: sensors are cheap
devices, adding too many sensors to a network creates reliability with
reasonable cost. In cellular networks, dimensionning is done based on
peak traffic hours, and is thus under-used during low-traffic
periods. In order to conserve energy in both types of networks, some
nodes can be turned off, may it be temporarily or until other nodes
fail. But the topology of the network has to be maintained:
connectivity is needed for the network nodes to communicate, and
coverage maintenance assures the service to users. Simplicial homology
representation of wireless networks provides a mathematical
translation of the problem: remove vertices from an abstract
simplicial complex without modifying its topology.

The main idea of our algorithm is to use the information from the
topology of the network to reduce the number of vertices. First we use
simplicial homology representation to compute the topology of the
wireless network. Thanks to that representation, we are able to detect
the vertices that are the more redundant. We then remove vertices in
an optimal order for the computation complexity, while the topology of
the network, and in particular its Betti number, is unchanged. In the
remainder of this section, we present in full details the reduction
algorithm, that was first introduced in a limited form in
\cite{vergne_reduction_2013}. 

\subsection{Preliminaries}
The reduction algorithm takes as input an abstract simplicial complex
described by its list of simplices. But it also needs another
information. Indeed, if we consider for example an abstract simplicial
complex connected and without coverage holes, i.e.\ with Betti numbers
$\beta_0=1$ and $\beta_1=0$, the optimal mathematical reduction of
this complex is a single vertex. Its topology is in fact unchanged,
there is still one connected component, and no cycle around any
coverage hole. But that is not what we intend to do. Therefore we must
designate \emph{critical} vertices that define the limits of the
reduction. They usually define the boundary of the area, they
are then external boundary vertices. If there is a coverage hole whose
size must not be increased, the vertices surrounding it, that are
internal boundary vertices, have also to be listed as critical. But if
the area covered is not essential, then critical vertices can be
limited to access end points that have to stay connected for
instance. In 3D, the critical vertices would define the limit surface
of the volume. These critical vertices can not be removed by the algorithm.
Then the list of critical vertices, whatever they are, internal or
external boundary, access end points, etc, is given as input to the
reduction algorithm, along with the abstract simplicial complex. 
Please note that for the algorithm to give adequate results, the
critical vertices have to be correctly defined, that means that they
must really define the limits of the abstract simplicial complex to
reduce.

Then if the abstract simplicial complex is defined in dimension $d$,
then there exists $d$ nonzero homology groups. For $k\geq d$, the
$k$-th homology group $H_k$ does not exist and $\beta_k=0$. Therefore
it is possible to maintain the homology of the complex up to the
$(d-1)$-th degree. At the beginning of the algorithm, we must choose to
which degree the homology has to be maintained. For example, in two
dimensions, it is possible to maintain both connectivity and coverage,
$H_0$ and $H_1$,
or to maintain only the connectivity ($H_0$). Note that there is no sense to
maintain the $k$-th degree homology if the $(k-1)$-th is not
maintained since the former implies the latter, consider maintaining
coverage without connectivity for example. The degree of homology that
is to be maintained by the algorithm is denoted by $k_0$, typically in
two dimensions $k_0=1$, or sometimes $k_0=0$ to just maintain
connectivity without considering coverage.

\subsection{Characteristics} 
The algorithm works by calculating some characteristics for the
simplices of the abstract simplicial complex it has to reduce. If the
degree of homology that has to be maintained is $k_0$, then the
largest size of simplices that are concerned are $(k_0+1)$-simplices
(in order to compute $B_{k_0} = \mathrm{im} \, \partial_{k_0+1}$). We
can conclude then that simplices larger than $(k_0+1)$-simplices are
useless to the homology up to the $k_0$-th degree. For instance,
in two dimensions, if we want to maintain the homology up to the first
degree, that is the coverage, only $2$-simplices are concerned, and
larger simplices are useless.

Our idea is now to sort the $(k_0+1)$-simplices in order to know which
ones to conserve in the reduced abstract simplicial complex. To do
that we characterize the superfluousness of each $(k_0+1)$-simplex via
a \emph{degree} that we define below:
\begin{definition}[Degree]
  Let $[x_0,\dots,x_k]$ be a $k$-simplex of an abstract simplicial
  complex $S$ for $k$ integer. Its degree is the size of its largest
  coface:
 \begin{eqnarray*}
    D[x_0,\dots,x_k]=\max\{ d \mid [x_0\dots,x_k] \subset d\text{-simplex}\}.
  \end{eqnarray*}
\end{definition}
By definition, we can see that for any $k$-simplex, $k$ integer,
$D[x_0,\dots,x_k] \geq k$. 

We can see an example of computation of
degrees for the $2$-simplices of an abstract simplicial complex in
Fig.~\ref{fig_degree}. A $2$-simplex that is a maximum face, as
$[x_2,x_6,x_7]$, has a degree of $2$, whereas a $2$-simplex that is
the face of a $4$-simplex, as $[x_0,x_1,x_7]$, has a degree of $4$. 
\begin{figure}[h]
  \centering
      \begin{tikzpicture}
\coordinate (x1) at (-1,0);
\coordinate (x2) at  (1,-0.2);
\coordinate (x3) at  (2.5,1.1);
\coordinate (x4) at (1.2,1.5);
\coordinate (x5) at (-0.5,1.5);
\coordinate (x6) at (0,-0.5);
\coordinate (x7) at (2.3,-0.2);
\coordinate (x8) at (3.2,0.9);
\coordinate (x9) at (3.3,-0.1);
\fill [color=clair] (x1)--(x4)--(x5);
\fill [color=clair] (x2)--(x6)--(x4);
\fill [color=moyen,opacity=0.4] (x2)--(x3)--(x4);
\fill [color=redbox,opacity=0.4] (x1)--(x6)--(x4);
\fill [color=clair] (x3)--(x9)--(x8);
\fill [color=apricot,opacity=0.4] (x7)--(x9)--(x3);
\draw[color=moyen] (x1)--(x2)--(x3)--(x4)--(x5)--(x1);
\draw[color=moyen] (x3)--(x7)--(x8)--(x9)--(x3)--(x8);
\draw[color=moyen] (x7)--(x9);
\draw[color=moyen] (x1)--(x4)--(x2)--(x5);
\draw[color=moyen] (x1)--(x6)--(x2);
\draw[color=moyen] (x4)--(x6)--(x5);
\fill [color=fonce] (x1) circle (2pt);
\fill [color=fonce] (x2) circle (2pt);
\fill [color=fonce] (x3) circle (2pt);
\fill [color=fonce] (x4) circle (2pt);
\fill [color=fonce] (x5) circle (2pt);
\fill [color=fonce] (x6) circle (2pt);
\fill [color=fonce] (x7) circle (2pt);
\fill [color=fonce] (x8) circle (2pt);
\fill [color=fonce] (x9) circle (2pt);
\node [below] at (x1) {$x_0$};
\node [below] at (x2) {$x_2$};
\node [below] at (x6) {$x_1$};
\node [above] at (x3) {$x_6$};
\node [above] at (x4) {$x_7$};
\node [above] at (x5) {$x_8$};
\node [below] at (x7) {$x_3$};
\node [above] at (x8) {$x_5$};
\node [below] at (x9) {$x_4$};
\node [below] at (-0.2,-1) {\color{redbox}$D[x_0,x_1,x_7]=4$};
\node [above] at (1.75,1.9) {\color{moyen}$D[x_2,x_6,x_7]=2$};
\node [below] at (2.8,-0.8) {\color{apricot}$D[x_3,x_4,x_6]=3$};
    \end{tikzpicture}
  \caption{Example of computation of degrees of $2$-simplices}
\label{fig_degree}
\end{figure}
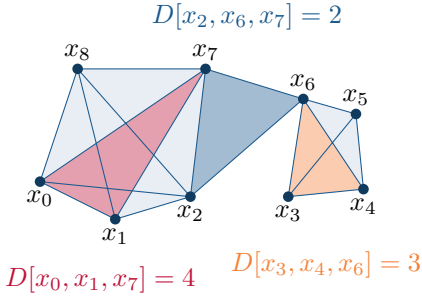

The greater the degree of a $k$-simplex is, the more superfluous this
$k$-simplex is. Therefore, the greater the degree of a
$(k_0+1)$-simplex is, the more likely it is to be removed, and its
removal is less likely to modify the $k_0$-th homology.

But, in order to reduce an abstract simplicial complex, we do not
remove directly $(k_0+1)$-simplices, we remove $0$-simplices, that
lead to the removal of their cofaces. Therefore, we need to bring the
information about the superfluousness of a $(k_0+1)$-simplex that is
contained in its degree, down to the $0$-simplex level. To do that, we
define an index for every $0$-simplex, that characterizes its level of
sensitivity for the $k_0$-th homology.
\begin{definition}[Index]
  Let $x$ be a $0$-simplex, its index is the minimum of its
  $(k_0+1)$-cofaces degrees:
  \begin{eqnarray*}
    I(x)=\min\{ D[x_0,\dots,x_{k_0+1}] \mid x \in [x_0,\dots,x_{k_0+1}]\}.
  \end{eqnarray*}
If $x$ has no $(k_0+1)$-coface then $I(x)=0$.
\end{definition}

We can see an example in Fig.~\ref{fig_index} of computation of indices
of the $0$-simplices of the abstract simplicial complex of
Fig.~\ref{fig_degree}. In this example, we are interested in the
$0$-th and $1$-st degrees of homology, i.e.\ connectivity and coverage,
that means that $k_0=1$ and we compute the degrees on the
$2$-simplices. 
\begin{figure}[h]
  \centering
        \begin{tikzpicture}
\coordinate (x1) at (-1,0);
\coordinate (x2) at  (1,-0.2);
\coordinate (x3) at  (2.5,1.1);
\coordinate (x4) at (1.2,1.5);
\coordinate (x5) at (-0.5,1.5);
\coordinate (x6) at (0,-0.5);
\coordinate (x7) at (2.3,-0.2);
\coordinate (x8) at (3.2,0.9);
\coordinate (x9) at (3.3,-0.1);
\fill [color=clair] (x1)--(x4)--(x5);
\fill [color=clair] (x2)--(x6)--(x4);
\fill [color=clair] (x2)--(x3)--(x4);
\fill [color=clair] (x1)--(x6)--(x4);
\fill [color=clair] (x3)--(x9)--(x8);
\fill [color=clair] (x7)--(x9)--(x3);
\draw[color=moyen] (x1)--(x2)--(x3)--(x4)--(x5)--(x1);
\draw[color=moyen] (x3)--(x7)--(x8)--(x9)--(x3)--(x8);
\draw[color=moyen] (x7)--(x9);
\draw[color=moyen] (x1)--(x4)--(x2)--(x5);
\draw[color=moyen] (x1)--(x6)--(x2);
\draw[color=moyen] (x4)--(x6)--(x5);
\fill [color=redbox] (x1) circle (2pt);
\fill [color=moyen] (x2) circle (2pt);
\fill [color=moyen] (x3) circle (2pt);
\fill [color=moyen] (x4) circle (2pt);
\fill [color=redbox] (x5) circle (2pt);
\fill [color=redbox] (x6) circle (2pt);
\fill [color=apricot] (x7) circle (2pt);
\fill [color=apricot] (x8) circle (2pt);
\fill [color=apricot] (x9) circle (2pt);
\node [below] at (-1.5,-0.2) {\color{redbox}$I(x_0)=4$};
\node [below] at (-0.5,-0.7) {\color{redbox}$I(x_1)=4$};
\node [below] at (1,-0.4) {\color{moyen}$I(x_2)=2$};
\node [below] at (2.5,-0.6) {\color{apricot}$I(x_3)=3$};
\node [below] at (4,-0.4) {\color{apricot}$I(x_4)=3$};
\node [above] at (4,1.1) {\color{apricot}$I(x_5)=3$};
\node [above] at (2.5,1.4) {\color{moyen}$I(x_6)=2$};
\node [above] at (0.9,1.7) {\color{moyen}$I(x_7)=2$};
\node [above] at (-1,1.7) {\color{redbox}$I(x_8)=4$};
\node [below] at (x1) {$x_0$};
\node [below] at (x2) {$x_2$};
\node [below] at (x6) {$x_1$};
\node [above] at (x3) {$x_6$};
\node [above] at (x4) {$x_7$};
\node [above] at (x5) {$x_8$};
\node [below] at (x7) {$x_3$};
\node [above] at (x8) {$x_5$};
\node [below] at (x9) {$x_4$};
    \end{tikzpicture}
  \caption{Example of computation of indices of $0$-simplices}
\label{fig_index}
\end{figure}
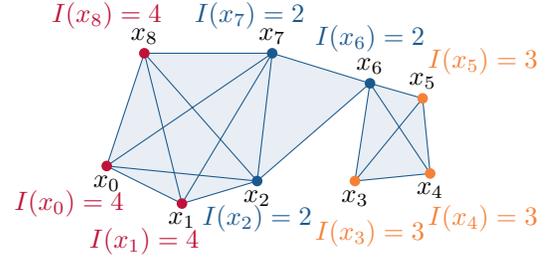

With this definition of indices, we can see that a $0$-simplex is as sensitive as
its most sensitive coface, or in other terms as superfluous as its least superfluous
coface. The index of a $0$-simplex can also be viewed as an indicator
of the density of $0$-simplices ``around'' it, in the neighbor
sense. For example, an index of $k_0+1$ indicates that at least one of
its $(k_0+1)$-coface has no $(k_0+2)$-cofaces, whereas an index
of $k >k_0+1$ indicates that each of its $(k_0+1)$-cofaces are the face of
simplices larger than $k$-simplices. The main idea of the algorithm is
thus to remove the vertices with the greatest indices. 

\subsection{Algorithm description}
As stated in the previous subsections, the reduction algorithm takes
as input an abstract simplicial complex and the list of critical
$0$-simplices that can not be removed. We must also know
the degree $k_0$ of homology that has to be maintained.
The algorithm begins by computing the topology of the network up to the
$k_0$-th degree of homology, that is the first $k_0+1$ Betti
numbers. Then, it computes the characteristics we need: the degrees of
the $(k_0+1)$-simplices and the indices of the $0$-simplices. After
that, the algorithm removes a $0$-simplex with a maximal index. If
there are more than one such index, one is chosen randomly
uniformly among them. The removal of a $0$-simplex leads to the
removal of all of its cofaces. Then the algorithm goes on doing the
same thing with the obtained reduced abstract simplicial complex.

There is one exception: if there is a difference between the Betti numbers
computed on the newly reduced abstract simplicial complex and the
original ones, the removal of the $0$-simplex is cancelled. That means that this
$0$-simplex is put back in the simplicial complex. To prevent from
trying to removing it again, that would lead to the same conclusion,
the $0$-simplex is flagged as critical, along with the input critical
$0$-simplices. Thus the list of critical $0$-simplices evolve during the
algorithm execution.

Then the algorithm goes on repeating the computation of the Betti
numbers, degrees and indices to find a $0$-simplex to remove. It
removes $0$-simplices one by one until the maximum index of a $0$-simplex
is equal to $k_0+1$. Indeed it is the minimum value for the degrees of
$(k_0+1)$-simplices, so it is the lower bound that it can reach.

We give in Algorithm~\ref{alg_ra} the whole reduction algorithm for the conservation
of the $k_0$-th homology. We use a negative index equal to $-1$ to flag
critical $0$-simplices as such. 

Considering an abstract simplicial
complex $S$, we denote by $s_k(S)$ its number of $k$-simplices and by
$\beta_k(S)$ its $k$-th Betti number, for $k$ integer. Then we denote by 
$x_1, \dots, x_{s_0(S)}$ its $0$-simplices and by $y_1,\dots, y_{s_{k_0+1}(S)}$
its $(k_0+1)$-simplices. We write $I(x)$ for the index of the $0$-simplex
$x$, and $D(y)$ for the degree of the $(k_0+1)$-simplex $y$.

\begin{algorithm}[h]
  \caption{Reduction algorithm}
\label{alg_ra}
  \begin{algorithmic}[h]
    \REQUIRE{abstract simplicial complex $S$,
 list $L$ of critical $0$-simplices.}
\STATE{Computation of $\beta_0(S), \dots, \beta_{k_0}(S)$\;}
\STATE{Computation of $D(y_1),\dots, D(y_{s_{k_0+1}(S)})$\;}
\STATE{Computation of $I(x_1),\dots,I(x_{s_0(S)})$\;}
\FORALL{$ x \in L$} 
\STATE{$I(x)=-1$\;}
\ENDFOR{}
 \STATE{$I_{\max}=\max \{I(x_1),\dots,I(x_{s_0(S)})\}$\;}
\WHILE{$I_{\max} > k_0+1$} 
\STATE{Draw uniformly $\hat x$ such that $I(\hat x)=I_{\max}$\;}
\STATE{$S'=S\backslash \{\hat x\}$\; \%Removal of $\hat x$ and its
cofaces}
\STATE{Computation of $\beta_0(S'), \dots, \beta_{k_0}(S')$\;}
\IF{$\beta_k(S')\neq \beta_k(S) \text{ for some } k=0,\dots, k_0$}
    \STATE{$I(\hat x)=-1$\;}
\ELSE{} 
\STATE{Computation of $D(y_1),\dots, D(y_{s_{k_0+1}(S')})$\;}
\FORALL{$ x \in \{x_1,\dots,x_{s_0(S')}\}$}
    \IF{$I(x)\neq-1$}
\STATE{Computation of $I(x)$\;}
\ENDIF{}
\ENDFOR{}
\STATE{$I_{\max}=\max \{I(x_1),\dots,I(x_{s_0(S')})\}$\;}
\STATE{$S=S'$\;}
    \ENDIF{}
    \ENDWHILE{}
    \RETURN{$X$}
  \end{algorithmic}
\end{algorithm}

We can see in Fig.~\ref{fig_reduc} an example of the reduction
algorithm for the homology conservation up to the first degree on a
Vietoris-Rips complex with a boundary of critical vertices along the
square. 

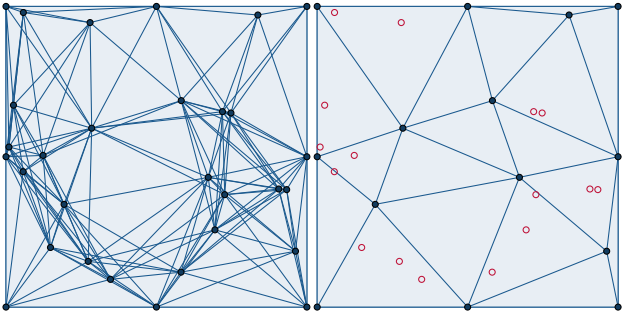
\begin{figure}[h]
  \centering
\begin{tikzpicture}[scale=2]
\draw  [draw=moyen] (0,0) rectangle (2,2); 
\fill [fill=clair] (0,0) rectangle (2,2);
\useasboundingbox (0,0) rectangle (2,2);
\begin{pgfonlayer}{foreground layer}
\coordinate (x1) at (1,0);
\coordinate (x2) at (2,0);
\coordinate (x3) at (2,1);
\coordinate (x4) at (2,2);
\coordinate (x5) at (1,2);
\coordinate (x6) at (0,2);
\coordinate (x7) at (0,1);
\coordinate (x8) at (0,0);
\coordinate (x9) at (1.4953,1.2911);
\coordinate (x10) at (0.2464,1.0088);
\coordinate (x11) at (0.6945,0.1843);
\coordinate (x12) at (0.2957,0.3963);
\coordinate (x13) at (1.3445,0.8630);
\coordinate (x14) at (1.3888,0.5136);
\coordinate (x15) at (0.0195,1.0646);
\coordinate (x16) at (0.5588,1.8925);
\coordinate (x17) at (1.8129,0.7854);
\coordinate (x18) at (0.0497,1.3429);
\coordinate (x19) at (1.6743,1.9430);
\coordinate (x20) at (0.1139,0.9006);
\coordinate (x21) at (1.1649,1.3733);
\coordinate (x22) at (1.4389,1.3001);
\coordinate (x23) at (1.4538,0.7477);
\coordinate (x24) at (1.1632,0.2322);
\coordinate (x25) at (0.1153,1.9595);
\coordinate (x26) at (0.5696,1.1899);
\coordinate (x27) at (1.9243,0.3716);
\coordinate (x28) at (0.3861,0.6833);
\coordinate (x29) at (1.8656,0.7813);
\coordinate (x30) at (0.5464,0.3039);
\draw [color=moyen]  (x1)--(x2);
\draw [color=moyen]  (x1)--(x8);
\draw [color=moyen]  (x1)--(x11);
\draw [color=moyen]  (x1)--(x12);
\draw [color=moyen]  (x1)--(x13);
\draw [color=moyen]  (x1)--(x14);
\draw [color=moyen]  (x1)--(x23);
\draw [color=moyen]  (x1)--(x24);
\draw [color=moyen]  (x1)--(x27);
\draw [color=moyen]  (x1)--(x28);
\draw [color=moyen]  (x1)--(x30);
\draw [color=moyen]  (x2)--(x3);
\draw [color=moyen]  (x2)--(x14);
\draw [color=moyen]  (x2)--(x17);
\draw [color=moyen]  (x2)--(x23);
\draw [color=moyen]  (x2)--(x24);
\draw [color=moyen]  (x2)--(x27);
\draw [color=moyen]  (x2)--(x29);
\draw [color=moyen]  (x3)--(x4);
\draw [color=moyen]  (x3)--(x9);
\draw [color=moyen]  (x3)--(x13);
\draw [color=moyen]  (x3)--(x14);
\draw [color=moyen]  (x3)--(x17);
\draw [color=moyen]  (x3)--(x19);
\draw [color=moyen]  (x3)--(x21);
\draw [color=moyen]  (x3)--(x22);
\draw [color=moyen]  (x3)--(x23);
\draw [color=moyen]  (x3)--(x27);
\draw [color=moyen]  (x3)--(x29);
\draw [color=moyen]  (x4)--(x5);
\draw [color=moyen]  (x4)--(x9);
\draw [color=moyen]  (x4)--(x19);
\draw [color=moyen]  (x4)--(x22);
\draw [color=moyen]  (x5)--(x6);
\draw [color=moyen]  (x5)--(x9);
\draw [color=moyen]  (x5)--(x16);
\draw [color=moyen]  (x5)--(x19);
\draw [color=moyen]  (x5)--(x21);
\draw [color=moyen]  (x5)--(x22);
\draw [color=moyen]  (x5)--(x25);
\draw [color=moyen]  (x5)--(x26);
\draw [color=moyen]  (x6)--(x7);
\draw [color=moyen]  (x6)--(x15);
\draw [color=moyen]  (x6)--(x16);
\draw [color=moyen]  (x6)--(x18);
\draw [color=moyen]  (x6)--(x25);
\draw [color=moyen]  (x6)--(x26);
\draw [color=moyen]  (x7)--(x8);
\draw [color=moyen]  (x7)--(x10);
\draw [color=moyen]  (x7)--(x12);
\draw [color=moyen]  (x7)--(x15);
\draw [color=moyen]  (x7)--(x18);
\draw [color=moyen]  (x7)--(x20);
\draw [color=moyen]  (x7)--(x25);
\draw [color=moyen]  (x7)--(x26);
\draw [color=moyen]  (x7)--(x28);
\draw [color=moyen]  (x7)--(x30);
\draw [color=moyen]  (x8)--(x11);
\draw [color=moyen]  (x8)--(x12);
\draw [color=moyen]  (x8)--(x20);
\draw [color=moyen]  (x8)--(x28);
\draw [color=moyen]  (x8)--(x30);
\draw [color=moyen]  (x9)--(x13);
\draw [color=moyen]  (x9)--(x14);
\draw [color=moyen]  (x9)--(x17);
\draw [color=moyen]  (x9)--(x19);
\draw [color=moyen]  (x9)--(x21);
\draw [color=moyen]  (x9)--(x22);
\draw [color=moyen]  (x9)--(x23);
\draw [color=moyen]  (x9)--(x26);
\draw [color=moyen]  (x9)--(x29);
\draw [color=moyen]  (x10)--(x11);
\draw [color=moyen]  (x10)--(x12);
\draw [color=moyen]  (x10)--(x15);
\draw [color=moyen]  (x10)--(x16);
\draw [color=moyen]  (x10)--(x18);
\draw [color=moyen]  (x10)--(x20);
\draw [color=moyen]  (x10)--(x21);
\draw [color=moyen]  (x10)--(x25);
\draw [color=moyen]  (x10)--(x26);
\draw [color=moyen]  (x10)--(x28);
\draw [color=moyen]  (x10)--(x30);
\draw [color=moyen]  (x11)--(x12);
\draw [color=moyen]  (x11)--(x13);
\draw [color=moyen]  (x11)--(x14);
\draw [color=moyen]  (x11)--(x20);
\draw [color=moyen]  (x11)--(x23);
\draw [color=moyen]  (x11)--(x24);
\draw [color=moyen]  (x11)--(x28);
\draw [color=moyen]  (x11)--(x30);
\draw [color=moyen]  (x12)--(x15);
\draw [color=moyen]  (x12)--(x18);
\draw [color=moyen]  (x12)--(x20);
\draw [color=moyen]  (x12)--(x24);
\draw [color=moyen]  (x12)--(x26);
\draw [color=moyen]  (x12)--(x28);
\draw [color=moyen]  (x12)--(x30);
\draw [color=moyen]  (x13)--(x14);
\draw [color=moyen]  (x13)--(x17);
\draw [color=moyen]  (x13)--(x21);
\draw [color=moyen]  (x13)--(x22);
\draw [color=moyen]  (x13)--(x23);
\draw [color=moyen]  (x13)--(x24);
\draw [color=moyen]  (x13)--(x26);
\draw [color=moyen]  (x13)--(x27);
\draw [color=moyen]  (x13)--(x28);
\draw [color=moyen]  (x13)--(x29);
\draw [color=moyen]  (x13)--(x30);
\draw [color=moyen]  (x14)--(x17);
\draw [color=moyen]  (x14)--(x21);
\draw [color=moyen]  (x14)--(x22);
\draw [color=moyen]  (x14)--(x23);
\draw [color=moyen]  (x14)--(x24);
\draw [color=moyen]  (x14)--(x27);
\draw [color=moyen]  (x14)--(x29);
\draw [color=moyen]  (x14)--(x30);
\draw [color=moyen]  (x15)--(x16);
\draw [color=moyen]  (x15)--(x18);
\draw [color=moyen]  (x15)--(x20);
\draw [color=moyen]  (x15)--(x25);
\draw [color=moyen]  (x15)--(x26);
\draw [color=moyen]  (x15)--(x28);
\draw [color=moyen]  (x15)--(x30);
\draw [color=moyen]  (x16)--(x18);
\draw [color=moyen]  (x16)--(x21);
\draw [color=moyen]  (x16)--(x25);
\draw [color=moyen]  (x16)--(x26);
\draw [color=moyen]  (x17)--(x21);
\draw [color=moyen]  (x17)--(x22);
\draw [color=moyen]  (x17)--(x23);
\draw [color=moyen]  (x17)--(x24);
\draw [color=moyen]  (x17)--(x27);
\draw [color=moyen]  (x17)--(x29);
\draw [color=moyen]  (x18)--(x20);
\draw [color=moyen]  (x18)--(x25);
\draw [color=moyen]  (x18)--(x26);
\draw [color=moyen]  (x18)--(x28);
\draw [color=moyen]  (x19)--(x21);
\draw [color=moyen]  (x19)--(x22);
\draw [color=moyen]  (x20)--(x26);
\draw [color=moyen]  (x20)--(x28);
\draw [color=moyen]  (x20)--(x30);
\draw [color=moyen]  (x21)--(x22);
\draw [color=moyen]  (x21)--(x23);
\draw [color=moyen]  (x21)--(x26);
\draw [color=moyen]  (x21)--(x29);
\draw [color=moyen]  (x22)--(x23);
\draw [color=moyen]  (x22)--(x26);
\draw [color=moyen]  (x22)--(x29);
\draw [color=moyen]  (x23)--(x24);
\draw [color=moyen]  (x23)--(x26);
\draw [color=moyen]  (x23)--(x27);
\draw [color=moyen]  (x23)--(x29);
\draw [color=moyen]  (x24)--(x27);
\draw [color=moyen]  (x24)--(x28);
\draw [color=moyen]  (x24)--(x29);
\draw [color=moyen]  (x24)--(x30);
\draw [color=moyen]  (x25)--(x26);
\draw [color=moyen]  (x26)--(x28);
\draw [color=moyen]  (x26)--(x30);
\draw [color=moyen]  (x27)--(x29);
\draw [color=moyen]  (x28)--(x30);
\draw [fill=fonce] (x1) circle (0.02cm);
\draw [fill=fonce] (x2) circle (0.02cm);
\draw [fill=fonce] (x3) circle (0.02cm);
\draw [fill=fonce] (x4) circle (0.02cm);
\draw [fill=fonce] (x5) circle (0.02cm);
\draw [fill=fonce] (x6) circle (0.02cm);
\draw [fill=fonce] (x7) circle (0.02cm);
\draw [fill=fonce] (x8) circle (0.02cm);
\draw [fill=fonce] (x9) circle (0.02cm);
\draw [fill=fonce] (x10) circle (0.02cm);
\draw [fill=fonce] (x11) circle (0.02cm);
\draw [fill=fonce] (x12) circle (0.02cm);
\draw [fill=fonce] (x13) circle (0.02cm);
\draw [fill=fonce] (x14) circle (0.02cm);
\draw [fill=fonce] (x15) circle (0.02cm);
\draw [fill=fonce] (x16) circle (0.02cm);
\draw [fill=fonce] (x17) circle (0.02cm);
\draw [fill=fonce] (x18) circle (0.02cm);
\draw [fill=fonce] (x19) circle (0.02cm);
\draw [fill=fonce] (x20) circle (0.02cm);
\draw [fill=fonce] (x21) circle (0.02cm);
\draw [fill=fonce] (x22) circle (0.02cm);
\draw [fill=fonce] (x23) circle (0.02cm);
\draw [fill=fonce] (x24) circle (0.02cm);
\draw [fill=fonce] (x25) circle (0.02cm);
\draw [fill=fonce] (x26) circle (0.02cm);
\draw [fill=fonce] (x27) circle (0.02cm);
\draw [fill=fonce] (x28) circle (0.02cm);
\draw [fill=fonce] (x29) circle (0.02cm);
\draw [fill=fonce] (x30) circle (0.02cm);
\end{pgfonlayer}{foreground layer}
\end{tikzpicture} 
\begin{tikzpicture}[scale=2]
\draw  [draw=moyen] (0,0) rectangle (2,2); 
\fill [fill=clair] (0,0) rectangle (2,2);
\useasboundingbox (0,0) rectangle (2,2);
\begin{pgfonlayer}{foreground layer}
\coordinate (x1) at (1,0);
\coordinate (x2) at (2,0);
\coordinate (x3) at (2,1);
\coordinate (x4) at (2,2);
\coordinate (x5) at (1,2);
\coordinate (x6) at (0,2);
\coordinate (x7) at (0,1);
\coordinate (x8) at (0,0);
\coordinate (x9) at (1.4953,1.2911);
\coordinate (x10) at (0.2464,1.0088);
\coordinate (x11) at (0.6945,0.1843);
\coordinate (x12) at (0.2957,0.3963);
\coordinate (x13) at (1.3445,0.8630);
\coordinate (x14) at (1.3888,0.5136);
\coordinate (x15) at (0.0195,1.0646);
\coordinate (x16) at (0.5588,1.8925);
\coordinate (x17) at (1.8129,0.7854);
\coordinate (x18) at (0.0497,1.3429);
\coordinate (x19) at (1.6743,1.9430);
\coordinate (x20) at (0.1139,0.9006);
\coordinate (x21) at (1.1649,1.3733);
\coordinate (x22) at (1.4389,1.3001);
\coordinate (x23) at (1.4538,0.7477);
\coordinate (x24) at (1.1632,0.2322);
\coordinate (x25) at (0.1153,1.9595);
\coordinate (x26) at (0.5696,1.1899);
\coordinate (x27) at (1.9243,0.3716);
\coordinate (x28) at (0.3861,0.6833);
\coordinate (x29) at (1.8656,0.7813);
\coordinate (x30) at (0.5464,0.3039);
\draw [color=moyen]  (x1)--(x2);
\draw [color=moyen]  (x1)--(x8);
\draw [color=moyen]  (x1)--(x13);
\draw [color=moyen]  (x1)--(x27);
\draw [color=moyen]  (x1)--(x28);
\draw [color=moyen]  (x2)--(x3);
\draw [color=moyen]  (x2)--(x27);
\draw [color=moyen]  (x3)--(x4);
\draw [color=moyen]  (x3)--(x13);
\draw [color=moyen]  (x3)--(x19);
\draw [color=moyen]  (x3)--(x21);
\draw [color=moyen]  (x3)--(x27);
\draw [color=moyen]  (x4)--(x5);
\draw [color=moyen]  (x4)--(x19);
\draw [color=moyen]  (x5)--(x6);
\draw [color=moyen]  (x5)--(x19);
\draw [color=moyen]  (x5)--(x21);
\draw [color=moyen]  (x5)--(x26);
\draw [color=moyen]  (x6)--(x7);
\draw [color=moyen]  (x6)--(x26);
\draw [color=moyen]  (x7)--(x8);
\draw [color=moyen]  (x7)--(x26);
\draw [color=moyen]  (x7)--(x28);
\draw [color=moyen]  (x8)--(x28);
\draw [color=moyen]  (x13)--(x21);
\draw [color=moyen]  (x13)--(x26);
\draw [color=moyen]  (x13)--(x27);
\draw [color=moyen]  (x13)--(x28);
\draw [color=moyen]  (x19)--(x21);
\draw [color=moyen]  (x21)--(x26);
\draw [color=moyen]  (x26)--(x28);
\draw [fill=fonce] (x1) circle (0.02cm);
\draw [fill=fonce] (x2) circle (0.02cm);
\draw [fill=fonce] (x3) circle (0.02cm);
\draw [fill=fonce] (x4) circle (0.02cm);
\draw [fill=fonce] (x5) circle (0.02cm);
\draw [fill=fonce] (x6) circle (0.02cm);
\draw [fill=fonce] (x7) circle (0.02cm);
\draw [fill=fonce] (x8) circle (0.02cm);
\draw [draw=redbox] (x9) circle (0.02cm);
\draw [draw=redbox] (x10) circle (0.02cm);
\draw [draw=redbox] (x11) circle (0.02cm);
\draw [draw=redbox] (x12) circle (0.02cm);
\draw [fill=fonce] (x13) circle (0.02cm);
\draw [draw=redbox] (x14) circle (0.02cm);
\draw [draw=redbox] (x15) circle (0.02cm);
\draw [draw=redbox] (x16) circle (0.02cm);
\draw [draw=redbox] (x17) circle (0.02cm);
\draw [draw=redbox] (x18) circle (0.02cm);
\draw [fill=fonce] (x19) circle (0.02cm);
\draw [draw=redbox] (x20) circle (0.02cm);
\draw [fill=fonce] (x21) circle (0.02cm);
\draw [draw=redbox] (x22) circle (0.02cm);
\draw [draw=redbox] (x23) circle (0.02cm);
\draw [draw=redbox] (x24) circle (0.02cm);
\draw [draw=redbox] (x25) circle (0.02cm);
\draw [fill=fonce] (x26) circle (0.02cm);
\draw [fill=fonce] (x27) circle (0.02cm);
\draw [fill=fonce] (x28) circle (0.02cm);
\draw [draw=redbox] (x29) circle (0.02cm);
\draw [draw=redbox] (x30) circle (0.02cm);
\end{pgfonlayer}{foreground layer}
\end{tikzpicture} 
\caption{Example of the reduction algorithm on a Vietoris-Rips complex.}
\label{fig_reduc} 
\end{figure}

We can note that it is possible to reduce the computations of the
algorithm by remarking that when a $0$-simplex of index $I$ is removed
then only the $0$-faces of its $I$-cofaces with index $I$ can have their index
impacted as proofed in the following lemma. That means that we only
need to re-compute the degrees needed for the indices of the
$0$-simplices that  shared an $I$-simplex with the removed
$0$-simplex.  

\begin{lemma}
\label{lemma_I}
When a $0$-simplex of index $I$ is removed, only the
  $0$-faces of its $I$-cofaces with index equal to $I$ can have their
  index modified. 
\end{lemma}

\begin{proof}
  Let $\hat x$ be the removed $0$-simplex, and $x$ be any $0$-simplex of
  the current abstract simplicial complex, we denote by $I(x)$ its
  index. We differentiate four cases: 
  \begin{itemize}
  \item $x$ and $\hat x$ have no common coface.
  \end{itemize}
Then none of the degrees of the $(k_0+1)$-cofaces of $x$ will change,
and neither will its index.
  \begin{itemize}
  \item $x$ and $\hat x$ have a maximum common coface that is a
    $k$-simplex with $k<k_0$. 
  \end{itemize}
As in the previous case, none of the degrees of the $(k_0+1)$-cofaces of
$x$ will change, and neither will its index.
 \begin{itemize}
  \item $x$ and $\hat x$ have a maximum common coface that is a
    $k$-simplex with $k_0\leq k < I$.
  \end{itemize}
Then $\hat x$ should have an index of $k<I$, which is absurd.
\begin{itemize}
  \item $x$ and $\hat x$ have a maximum common coface that is a
    $k$-simplex with $k \geq I$.
  \end{itemize}
Either $I(x)<I$, then it comes from the degree of a $(k_0+1)$-simplex
not common with $\hat x$, and its index does not change with the
removal of $\hat x$.
Else, if $I(x)=I$, either its value comes from a $I$-simplex not shared
with $\hat x$ and remains unmodified. Or it comes from a common
$I$-simplex. Only in this latter case, $I(x)$ is modified by the
removal of $\hat x$. 
\end{proof}

\subsection{Properties}

\subsubsection{Homology invariance}
We have built our reduction algorithm to be homology invariant. That
means that the initial abstract complex, the final reduced complex and
every intermediary reduced complex are homotopy equivalent. They have
the same Betti numbers and any basis element of the $k$-th homology
group $H_k$ in the initial complex can be mapped to a basis element of
the $k$-th homology group in the final complex for any $k$ integer.

\begin{theorem}
The reduction algorithm stated in the algorithm described in
Alg.~\ref{alg_ra} is homology invariant up to the $k_0$-th degree.   
\end{theorem}
\begin{proof}
To verify that the algorithm is homology invariant up to the $k_0$-th
degree, we need only to check that each loop does not modify the
$k_0$-th homology. In each loop of the algorithm, we verify that the
Betti numbers $\beta_0, \dots, \beta_{k_0}$ are unchanged. That means
that the dimension of the homology groups $H_0, \dots, H_{k_0}$ do not
change. For any $k \in \{0,\dots,k_0\}$, if one or more cycle are
added in $H_k$, by the removal of a $0$-simplex, then the removal 
is cancelled by the algorithm. The same goes with one or more
deletions of cycles in $H_k$. The only way that a change in the homology is
undetected and allowed by the algorithm is if in a loop the same
number of cycles of $H_k$ are simultaneously added and deleted by a
single $0$-simplex removal. 

For any $k \in \{0,\dots,k_0\}$, $H_k$ is the $k$-th homology group of
cycles of $k$-simplices ($Z_k$) that are not boundaries of
$(k+1)$-simplices ($B_k$), note that $B_k \subset Z_k$.

On the one hand, let us look what happens if a cycle is added in
$H_k$. Since a new cycle of $Z_k$, that is a list of $k$-simplices, can not be
created by removing simplices, the removal of a $0$-simplex adds a
cycle in $H_k$ only if a cycle which was both in $B_k$ and $Z_k$
ceases to be in $B_k$, i.e.\ if the $0$-simplex had a $(k+1)$-coface
that was not redundant.

On the other hand, we investigate the deletion of a cycle in
$H_k$. The removal of a $0$-simplex can not make a cycle only in $Z_k$
to be in both $B_k$ and $Z_k$ since it does not create
$(k+1)$-simplices. So a deletion of a cycle in $H_k$, due to the
removal of a $0$-simplex, is necessarily a deletion in $Z_k$. 
Therefore, the removal of a  $0$-simplex deletes a cycle in $H_k$
only if this $0$-simplex had a $k$-coface which was in the cycles of
$Z_k$ and not the boundaries of $B_K$. That means that this
$0$-simplex was a boundary vertex of a $k$-th dimensional hole.

If a $0$-simplex that is both a boundary vertex to a $k$-th
dimensional hole and has a non-redundant $(k+1)$-coface is removed,
simultaneously a cycle is deleted and another is created in $H_k$.
The deletion of the $0$-simplex leads to an enlargement of the $k$-th
dimensional hole. The $(k+1)$-coface assures that the hole
still has boundaries and exists. It is then possible to map the cycle of
the smaller (previous to the $0$-simplex removal) hole to the larger
(post $0$-simplex removal) hole. And the two abstract simplicial
complexes (pre and post $0$-simplex removal) are homotopy equivalent. 
\end{proof}

We can note that if we want to have a reduction algorithm that is
homology invariant and that do not enlarge $k$-th dimensional holes,
then all the hole boundary vertices must be defined as critical in the input.

\subsubsection{Optimal order for vertices removal}
In our reduction algorithm, we choose to compute the topology and use
it to reduce an abstract simplicial complex in order to minimize the
size of the complex in relation to its topology. In addition to
keeping the homology invariant, we use the homology information to
improve the algorithm performance.

\begin{theorem}
The order in which $0$-simplices and their cofaces are removed from
the abstract simplicial complex by the reduction algorithm defined in
Alg.~\ref{alg_ra} is optimal for its computation complexity.
\end{theorem}

\begin{proof}
When a $0$-simplex is removed from the abstract simplicial
complex, all of its coface are subsequently removed too. In
Alg.~\ref{alg_ra}, the $0$-simplices are removed by decreasing
indices. An index of $I$ indicates that every $(k_0+1)$-coface of the
$0$-simplex has $I$-faces at least. The bigger the index of a
$0$-simplex is, the bigger the degrees of its $(k_0+1)$-cofaces
are. So the removal of the maximum index $0$-simplex minimize at most
the degrees computations that must be done in every loop.

Moreover we can see that the bigger the index of a $0$-simplex is, the
bigger the minimum common size of its cofaces greater than
$(k_0+1)$-simplices is. An abstract simplicial complex is implemented
by the list of all its simplices. And the homology up to the $k_0$-th
degree (i.e. $(k_0+1)$-simplices), is computed. By eliminating the
greatest $(k_0+1)$-simplices cofaces, the algorithm reduces also the
size of the simplicial complex implementation, and its topology
computation.  
\end{proof}

\subsubsection{Optimal solution}
The reduction algorithm reaches a local optimum, that may not be
the global optimum if there are multiple local optima. In game theory
vocabulary, that means that the algorithm reaches a Nash equilibrium
as defined in~\cite{campos-nanez_game-theoretic_2008}:
\begin{theorem}
The reduction algorithm defined in Alg.~\ref{alg_ra} reaches a Nash
equilibrium: the final complex can not be further reduced, no more
$0$-simplex can be removed.
\end{theorem}

\begin{proof}
By definition of degrees and indices given in the previous subsection,
indices computed on $(k_0+1)$-simplices degrees are greater or equal to
$k_0+1$. However, in the final complex, every $0$-simplex is of index
smaller or equal to $k_0+1$, since its the ending condition on the
``while'' loop. 

Then there are three possibility for the value $I(x)$ of the index of
a $0$-simplex $x$ in the final complex:
\begin{itemize}
\item $I(x)=-1$ which means that $x$ has been defined or flagged as
  critical.
\item $I(x)=0$ which means that $x$ has no $(k_0+1)$-coface.
\item $I(x)=k_0+1$.
\end{itemize}

First, a critical $0$-simplex can be either defined as such, in which case
its removal was forbidden, and it should be in the final complex. Or
it as been flagged by the algorithm because its removal was tried and
changed the Betti numbers of the abstract simplicial complex, which is
forbidden. If the initial critical $0$-simplices are well-defined as
the limits of the complex, then the removal of a vertex $0$-simplex
that has led to a change in a Betti number would always lead to the
same change. So a flagged critical $0$-simplex stays as such.

Secondly, a $0$-simplex of null index is an isolated vertex for
the $k_0$-th homology as it has no $(k_0+1)$-coface. Then, its removal 
would decrease one of the Betti numbers $\beta_0,\dots
,\beta_{k_0}$. For example, the removal of a $0$-simplex with no
$1$-cofaces will decrease $\beta_0$, and the removal of a $0$-simplex
with $1$-cofaces and no $2$-cofaces would decrease either $\beta_1$.

Finally, if $x$ has a degree of $I(x)=k_0+1$. That means that at least one
of its $(k_0+1)$-cofaces has no larger coface. Then the removal of $x$
would lead to the removal of this $(k_0+1)$-simplex with no
coface. This would create a $k_0$-th dimensional hole, and
$\beta_{k_0}$ would be incremented.
\end{proof}

\subsubsection{Bounds for the number of removed 0-simplices}
With the reduction algorithm, we go from an initial abstract
simplicial complex to a final complex with an optimal number of
$0$-simplices. We are now interested in the number of $0$-simplices
that can be removed from the initial complex.
\begin{theorem}
Let $E_k$ be the set of $0$-simplices that have index $k$ in the
initial complex, and $|E_k|$ be its cardinality. Then the number $M$
of removed $0$-simplices by the algorithm defined in Alg.~\ref{alg_ra}
is bounded by:
 \begin{eqnarray*}
    \sum_{k=k_0+2}^{I_{\max}} \mathds{1}_{E_k \neq \emptyset} \le M \le
   \sum_{k=k_0+2}^{I_{\max}} | E_k |.
  \end{eqnarray*}
\end{theorem}

\begin{proof}
We begin by looking at the upper bound. First, let us state that a
$0$-simplex with an index equal or less than $k_0+1$, can not have its
index increased during the algorithm. It is a direct consequence of
Lemma~\ref{lemma_I} and the fact that the stopping limit is when the
maximum index is equal to $k_0+1$. Then $0$-simplices of index equal
or less than $k_0+1$ are never removed by the algorithm. Thus the
number of other $0$-simplices is an upper bound for $M$.

For the lower bound,  according to Lemma~\ref{lemma_I}, the
removal of a $0$-simplex of index $I_{\max}$ can only modify the
indices that were set to $I_{\max}$ previously. In the worst case, all
indices $I_{\max}$ change and the value of $I_{\max}$ is decreased. 
Thus, at least one $0$-simplex per index value is removed. And the
number of index values strictly above $k_0+1$ constitutes a lower
bound for $M$. 
\end{proof}

\begin{remark}
We can note that the upper bound for the number of removed
$0$-simplices is optimal. Since it is reached for example in the case
of an abstract simplicial complex limited to a $n$-simplex and its
faces, with $n$ integer. Whatever the definition of the limit with the 
critical $0$-simplices, only the latter will stay in the final complex.

  We can also note that the lower bound for the number of removed 
$0$-simplices is optimal too. Indeed, it is reached for instance for a
complex limited to a $n$-simplex with $n-1$ critical $0$-simplices.
\end{remark}

\section{Complexity}
\label{sec_comp}

In this section, we investigate the complexity of the algorithm
presented in Alg.~\ref{alg_ra} for the conservation of the $k_0$-th
homology. For $S$ an abstract simplicial complex, let
us denote by $s_k$ the numbers of its $k$-simplices for any $k$
integer. We also denote by $K$ the integer such that the maximum
simplex in $S$ is a $K$-simplex. Then the size of the input data of
the reduction algorithm depends on $s_0, \dots, s_K$. 

To compute the complexity of the whole reduction algorithm, we must
first compute the complexity of the computation of the $k_0+1$ first
Betti numbers denoted $\beta_0,\dots, \beta_{k_0}$.
\begin{proposition}
The complexity of the computation of the Betti numbers $\beta_0,\dots,
\beta_{k_0}$ is in $O(\max_{k=0,\dots,k_0+1}(s_{k}^3))$.
\end{proposition}
\begin{proof}
The computation of the Betti number $\beta_k$ relies on the
computation of the ranks of matrices $\partial_k$ of size
$s_{k-1}\times s_k$ and $\partial_{k+1}$ of size $s_k\times s_{k+1}$.

Moreover, the computation of the rank of a matrix of size $n\times m$
is of complexity $O(nm\min(n,m))$.
\end{proof}

Now, we look at the complexities of the computations of the degrees
and the indices needed in the reduction algorithm:
\begin{proposition}
  The complexity of the computation of the degree of a $(k_0+1)$-simplex is
  in $O(\sum_{k=k_0+2}^K s_k)$.
\end{proposition}
\begin{proof}
To compute the degree of a $k$-simplex, we must explore at most all
the larger simplices than $k$-simplices.
\end{proof}
\begin{proposition}
  The complexity of the computation of the index of a $0$-simplex is
  in $O(s_{k_0+1})$.
\end{proposition}
\begin{proof}
The index of a $0$-simplex is just the minimum of its $(k_0+1)$-cofaces'
degrees, that are at most $s_{k_0+1}$.
\end{proof}

Then, we are able to write the whole reduction algorithm complexity:
\begin{theorem}[Reduction algorithm complexity]
The reduction algorithm described in Alg.~\ref{alg_ra} has a
complexity in:
\begin{eqnarray*}
O(s_0(\max_{0\leq k \leq k_0+1}s_{k}^3+s_{k_0+1}\sum_{k=k_0+2}^K s_k)).
\end{eqnarray*}
\label{thm_comp}
\end{theorem}
\begin{proof}
In the reduction algorithm, the $k_0+1$ first Betti numbers $\beta_0,\dots,
\beta_{k_0}$, the $s_{k_0+1}$ degrees and the $s_0$ indices are
computed at each run that is at most $s_0$ times. The computation of
the indices becomes negligible compared to the computation of the
Betti numbers.

At the end, at most every simplex has been removed, then the removal
of simplices is of overall complexity of $O(\sum_{k=0}^K s_k)$. This
complexity is negligible for the whole algorithm.

The complexity to mark $0$-simplexes as critical and the complexity to
compute $I_{\max}$ are both in $O(s_0)$, and are also negligible.
\end{proof}

\begin{remark}
The complexity of the reduction algorithm is polynomial relatively to the size of
the input data $s_0,\dots, s_K$.
\end{remark}

Traditionally we prefer to express the complexity of an algorithm just
relatively to the number of points, that is $s_0$ the number of
$0$-simplices. But we can see that the number $K$ is key here.
Indeed every number $s_k$ appears directly in the complexity
formula. However, $K$ appears as the limit of a sum, and since it is
upper bounded only by $s_0$, its behavior determines if the complexity
is polynomial or exponential in $s_0$. 

Thus, we first keep the size of the largest simplex $K$ as a variable
in the complexity formula:
\begin{corollary}
The reduction algorithm described in Alg.~\ref{alg_ra} has a
complexity upper bounded by:
\begin{eqnarray*}
O(s_0^{3k_0+7}+s_0^{k_0+3}\sum_{k=k_0+2}^K\binom{s_0}{k+1}).
\end{eqnarray*}
\label{corol_comp}
\end{corollary}
\begin{proof}
The number of $k$-simplices $s_k$ is upper bounded by $\binom{s_0}{k+1}$
for any $k$ integer. Taking the expression of the complexity from
Theorem~\ref{thm_comp}, we have:
\begin{eqnarray*}
&&s_0(\max_{0\leq k \leq k_0+1}s_{k}^3+s_{k_0+1}\sum_{k=k_0+2}^{K}s_k)\\
&\leq& s_0(\max_{0 \leq k \leq k_0+1}\binom{s_0}{k+1}^3+
       \binom{s_0}{k_0+2}\sum_{k=k_0+2}^K\binom{s_0}{k+1})\\
&\leq& s_0 (\binom{s_0}{k_0+2}^3+\binom{s_0}{k_0+2}\sum_{k=k_0+2}^K\binom{s_0}{k+1})\\
\end{eqnarray*}
Since $k_0$ is a fixed small number compared to $s_0$ when $s_0$ goes
to infinity, and $\binom{s_0}{k}$ is increasing for $k\leq k_0+2$. 

Then the fact that $\binom{s_0}{k}\leq s_0^k$ concludes the proof
\end{proof}

Then we express the complexity with only the number of $0$-simplices
$s_0$ as a parameter:
\begin{corollary}
The reduction algorithm described in Alg.~\ref{alg_ra} has a
complexity upper bounded by:
\begin{eqnarray*}
O(s_0^{k_0+3}2^{s_0}).
\end{eqnarray*}
\label{corol_comp_max}
\end{corollary}
\begin{proof}
The size of the largest simplex $K$ can only be upper bounded by $s_0$
in the general case. Then the fact that $\sum_{k=0}^n
\binom{n}{k}=2^n$ concludes the proof.
\end{proof}

We can see that the complexity of the reduction algorithm relatively
to $s_0$ can vary drastically. Indeed if the number of large simplices
is negligible relatively to the number of smaller simplices then the
computation of the Betti numbers will be the preponderant part and the
complexity of the reduction algorithm will be polynomial in
$s_0$. However, if it is there are large simplices, then the
computation of the degrees will be the longest, and  the reduction
algorithm will have a complexity that is exponential in $s_0$. 

We propose in the following of this section a thorough study on the
behavior of the complex and the size of its largest simplex $K$
that will determine the complexity of the reduction algorithm.

\subsection{Model}
In order to study the behavior of an abstract simplicial complex, we
first must choose a particular complex. We select the Vietoris-Rips
complex for its mathematical tractability and its wireless network
representation capacity. We now have to decide on which set of points
the complex is build.

When evaluating the behavior of a complex, we need to take into
account side effect: indeed a point in the center of the plane area
considered will have more neighbors, and thus be part of larger
simplices that a point on the edge of the area. In order to avoid
these side effects, we choose to consider a torus instead of a plane. 
Let us denote by $\mathbb{T}^d_a$ the torus of side $a$ in
dimension $d$. Usually, we will have $d=2$.

We now define the space of configurations:
\begin{definition}
The space of configurations on $\mathbb{T}^d_a$ is the set of locally
finite simple point measures:
\begin{equation*}
  \Omega^{X}=\left\{\omega=\sum_{k=1}^n\delta(x_k)\ :\
    {(x_k)}_{k=1}^{k=n} \subset X,\
    n\in\mathbb{N}\cup\{\infty\}\right\}, 
\end{equation*}
where $\delta(x)$ denotes the Dirac measure for $x\in \mathbb{T}^d_a$.
\end{definition}

 It is convenient to identify an element $\omega$ of $\Omega^X$ with
the set corresponding to its support, i.e. $\sum_{k=1}^n\delta(x_k)$
is identified with the unordered set $\{x_1, \dots, x_n\}$. 
For $A \subset X$, we have $\delta(x)(A)=\mathds{1}_A(x)$, so
that $\omega(A)= \sum_{x\in\omega}\mathds{1}_A(x)$ counts the number
of points in $A$.  Simple measure means that there are no two points
in the same place, that is $\omega(\{x\})\le 1$ for any $x\in X$. Locally
finite means that $\omega(K)<\infty$ for any compact $K$ of $X$. The
configuration space $\Omega^{X}$ is endowed with the vague topology
and its associated $\sigma$-algebra denoted by $\mathcal{F}^{X}$. For
further reading on random point processes, we refer to
\cite{daley_introduction_2003}.   

We now define the binomial point process that is a variation of the
well-known Poisson point process but with a fixed number of points:
\begin{definition}[Binomial point process]
Let $f$ be the uniform probability density function on the torus
$\mathbb{T}^d_a$, and $n$ an integer. Then a point process $\omega$ is
a binomial point process of $n$ points on $\mathbb{T}^d_a$, if the
following two conditions hold:
\begin{enumerate}[(i)]
\item The process $\omega$ has $n$ points,
\item The points' positions are drawn according to $f$ independently
 from each other.
\end{enumerate}
\end{definition}

In order to have reasonable results we need to make two
assumptions. First, we need to ensure that the ratio $\frac{r}{a}$ that
is the ratio between the connexion distance of the Vietoris-Rips
complex and the size of the torus, is not too big so that two points
are able to be connected $2$ times on both sides of the torus:
\begin{assumption}
  Let us denote by $\theta$ the ratio ${\left( \frac{r}{a} \right)}^d$,
  then we assume that:
  \begin{eqnarray*}
\theta={\left( \frac{r}{a} \right) }^d \leq {\left( \frac{1}{2}
  \right) }^d.
  \end{eqnarray*}
\end{assumption}

Then the behavior of the number of simplices is not easy to obtain. In
\cite{decreusefond_simplicial_2014}, the authors provide expressions
for the moments of the number of simplices for the Vietoris-Rips
complex by means of Malliavin calculus. They obtain results for the
classic Euclidean norm, however the expressions are not
tractable. That is why we make Assumption 2:
\begin{assumption}
  For the construction of the Vietoris-Rips complex based on a
  binomial point process, we use the uniform norm. 

For $x \in \mathbb{T}^d_a$ of elements $(x_1,\dots,x_d)$ the uniform
norm of $x$ is:
  \begin{eqnarray*}
    \| x\|_\infty =\max\{ |x_1|,\dots , |x_d|\}.
  \end{eqnarray*}
\end{assumption}

Then we can use the results presented in
\cite{decreusefond_simplicial_2014}: 
\begin{theorem}[\cite{decreusefond_simplicial_2014}]
  Let $k \geq 1$ be an integer. The expectation and variance of the
  number of   $k$-simplices in a Vietoris-Rips complex based on a
  binomial point   process of $n=s_0$ points on the torus
  $\mathbb{T}_a^d$ are: 
\begin{eqnarray}
 \E{s_{k-1}}&=&\binom{n}{k} \theta^{k-1} k^d \label{chimean_e}\\
\var{s_{k-1}}&=&\sum_{i=1}^{k+1} 
\binom{n}{2k-i} \binom{2k-i}{k} \binom{k}{i} \nonumber\\
&&\theta^{2k-i-1}
{\left( 2k-i+2\frac{{(k-i)}^2}{i+1} \right)}^d.
\label{chimean_var}
\end{eqnarray}
\label{theorem_chimean}
\end{theorem}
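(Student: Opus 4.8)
**The plan is to compute both moments combinatorially by counting the relevant configurations of vertices and then taking expectations under the binomial point process, using the key simplification that under the uniform norm the Vietoris-Rips and \u{C}ech complexes coincide.**

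First I would establish the expectation formula \eqref{chimean_e}. A $(k-1)$-simplex is an unordered set of $k$ vertices that are pairwise within distance $2r$ under the uniform norm. The plan is to write $N_k = \sum_{S} \indicator{S \text{ is a simplex}}$, where the sum runs over the $\binom{n}{k}$ unordered $k$-subsets $S$ of the $n$ vertices. By linearity, $\E{N_k} = \binom{n}{k}\,\P(\text{a fixed set of } k \text{ i.i.d. uniform points forms a simplex})$. The heart of the computation is the probability that $k$ points drawn uniformly on the torus are pairwise within $2r$ in the uniform norm. Because the uniform norm factorizes over the $d$ coordinates and the torus is a product $\mathbb{T}^d_a = (\mathbb{T}^1_a)^d$, this probability equals the $d$-th power of the one-dimensional probability that $k$ points on a circle of circumference $a$ all lie within distance $2r$ of each other. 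I would compute that one-dimensional quantity and expect it to evaluate to $k\,\theta^{k-1}$ with $\theta = (r/a)^d$ as defined, so that raising to the power $d$ gives $k^d \theta^{k-1}$; multiplying by $\binom{n}{k}$ yields \eqref{chimean_e}. The torus geometry and the assumption $\theta \le (1/2)^d$ are what make the per-coordinate clustering probability clean (no wrap-around coincidences, no vertex its own neighbor).

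For the variance \eqref{chimean_var} I would expand $\var{N_k} = \E{N_k^2} - \E{N_k}^2$ by writing $N_k^2 = \sum_{S,S'} \indicator{S}\indicator{S'}$ and grouping pairs $(S,S')$ of $k$-subsets according to the size $i = |S \cap S'|$ of their overlap, $i$ ranging from $1$ to $k$ (the $i=0$ terms involving disjoint independent subsets are what cancels against $\E{N_k}^2$ up to lower-order bookkeeping, which is why the sum starts at $i=1$). For a fixed overlap size $i$, the number of ordered pairs sharing exactly $i$ vertices and spanning $2k-i$ distinct vertices is $\binom{n}{2k-i}\binom{2k-i}{k}\binom{k}{i}$, matching the combinatorial prefactor in \eqref{chimean_var}. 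I would then compute the joint probability that both $S$ and $S'$ are simplices on these $2k-i$ shared points; exploiting the coordinate factorization again, this reduces to a $d$-th power of a one-dimensional joint-clustering probability, which I expect to produce the factor $\theta^{2k-i-1}\bigl(2k-i+2(k-i)^2/(i+1)\bigr)^d$.

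\emph{The hard part will be the one-dimensional joint-clustering calculation that yields the term $\bigl(2k-i+2(k-i)^2/(i+1)\bigr)$.} The single-simplex probability is a relatively standard order-statistics computation on the circle, but for the variance one must integrate over the positions of two overlapping clusters that share $i$ common points while the remaining $k-i$ points of each cluster are free, subject to all pairwise constraints within each cluster. This requires careful conditioning on the span of the shared points and integrating the independent contributions of the two groups of $k-i$ private points, and it is where the nontrivial rational expression $2(k-i)^2/(i+1)$ arises. Rather than grind through this integral directly, I would invoke Theorem~\ref{theorem_chimean} of \cite{chimean}, whose Malliavin-calculus moment formulas already package exactly this computation; since the uniform norm makes the \u{C}ech and Vietoris-Rips complexes identical, their expressions transfer verbatim, so the proof is essentially a matter of specializing their result and verifying that the two setups agree.
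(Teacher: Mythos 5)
You should first be aware that the paper contains no proof of this statement at all: the theorem is imported verbatim from \cite{chimean} (hence the bracketed citation in its header). So your closing move --- specializing the result of \cite{chimean} after observing that under the uniform norm the \u{C}ech and Vietoris--Rips complexes coincide --- is exactly what the paper does, and your expectation sketch is essentially right: write $N_k$ as a sum of $\binom{n}{k}$ indicators, factorize the sup-norm constraint over the $d$ coordinates of the torus, and use the classical fact that $k$ i.i.d.\ uniform points on a circle of circumference $a$ are pairwise within distance $r$ with probability $k(r/a)^{k-1}$ when $r\leq a/2$; the $d$-th power gives $k^d\theta^{k-1}$. (Note that this forces the distance threshold to be $r$, not the $2r$ of the paper's definition of $\mathcal{R}_r$; that inconsistency is the paper's, not yours.)

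The genuine gap is in your variance plan. Write $c_i=\binom{n}{2k-i}\binom{2k-i}{k}\binom{k}{i}$ for the number of ordered pairs of $k$-subsets with overlap $i$, $p_k=k^d\theta^{k-1}$ for the simplex probability, and $q_{k,i}$ for the joint probability that both subsets of an overlap-$i$ pair form simplices. Since $\binom{n}{k}^2=\sum_{i=0}^{k}c_i$, the exact binomial-process variance is $\var{N_k}=\sum_{i=1}^{k}c_i\,(q_{k,i}-p_k^2)$: the $i=0$ terms do cancel exactly, but the subtracted quantity $\sum_{i\geq 1}c_i\,p_k^2$ is \emph{not} lower-order bookkeeping --- its $i=1$ piece has exactly the order of the $i=1$ term you propose to keep. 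Worse, on the torus two $k$-subsets sharing exactly one vertex have covariance exactly zero: conditioning on the position of the shared point, the two events involve disjoint collections of independent points and each has probability $p_k$ by translation invariance, so $q_{k,1}=p_k^2$. This is consistent with \eqref{chimean_var} itself, whose $i=1$ factor is $\theta^{2k-2}\bigl(2k-1+(k-1)^2\bigr)^d=k^{2d}\theta^{2k-2}=p_k^2$. Hence a faithful execution of your expansion for the binomial process yields $\var{N_k}=\sum_{i=2}^{k}c_i\,(q_{k,i}-p_k^2)$, with no $i=1$ contribution, whereas \eqref{chimean_var} contains a positive $i=1$ term of order $n^{2k-1}\theta^{2k-2}$. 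What \eqref{chimean_var} really is, is the Poisson-process variance --- the Mecke/chaos expansion of a Poisson U-statistic is precisely such a sum of positive terms over overlap sizes, the $i=1$ term reflecting fluctuations of the total number of points --- rewritten with binomial coefficients; it agrees with the binomial variance only asymptotically, in the regimes the paper later uses. So your combinatorial route, carried out rigorously, would not confirm \eqref{chimean_var} but would instead expose this Poisson/binomial discrepancy, which is presumably why both you and the paper end up simply citing \cite{chimean}.
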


For a better reading we will denote by $n$ the number of points of the
binomial point process, that is equal to $s_0$ the number of
$0$-simplices of the abstract simplicial complex.
Throughout this section we will investigate the almost sure asymptotic
behavior of the size of the largest simplex $K$ and the complexity of
the reduction algorithm when $n$ tends to infinity and with respect
to the distribution of the $n$ points according to a binomial point
process. 
\begin{definition}
We say that the property $P$ is true  asymptotically almost surely if
$\P[P \text{ true}] \rightarrow 1$ when $n$ tends to infinity.
\end{definition}

\subsection{Percolation regimes}
One can easily see that the number $K$ that is the size of the largest simplex
in a complex is the equivalent of the clique number $C$ in a
graph. More precisely we have that $K=C-1$, since a $k$-simplex has
$k+1$ points. Moreover the Vietoris-Rips complex is by definition the
clique complex of the geometric graph. Then the Vietoris-Rips complex
based on a binomial point process is the clique complex of the random
geometric graph. That is why we use the same percolation regimes to
study $K$ as the ones described for random geometric graphs. For
further reading random geometric graphs and the definition of
percolation regimes, see~\cite{penrose_random_2003}.
We show in Figure~\ref{fig_regimes} the three different percolation regimes.

\begin{figure}[h]
  \centering
  \includegraphics[width=2.8cm]{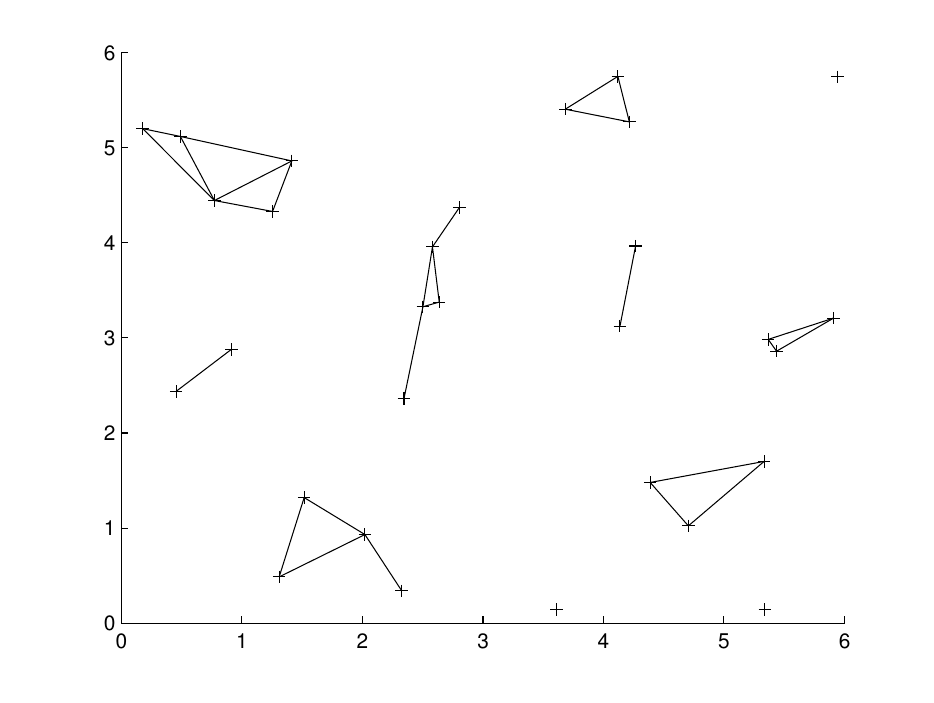}
\includegraphics[width=2.8cm]{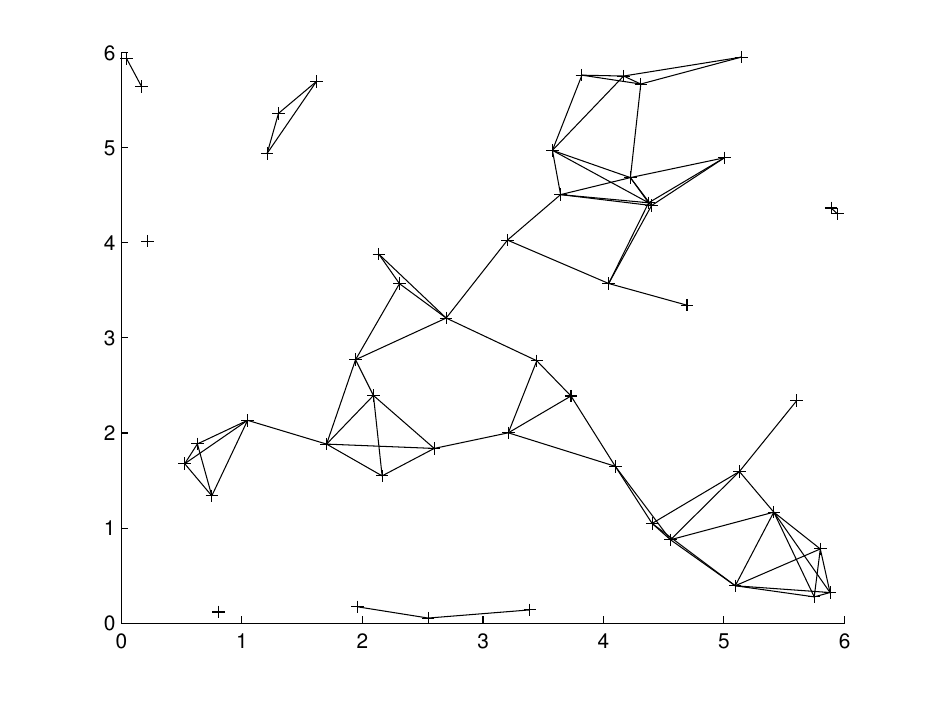}
\includegraphics[width=2.8cm]{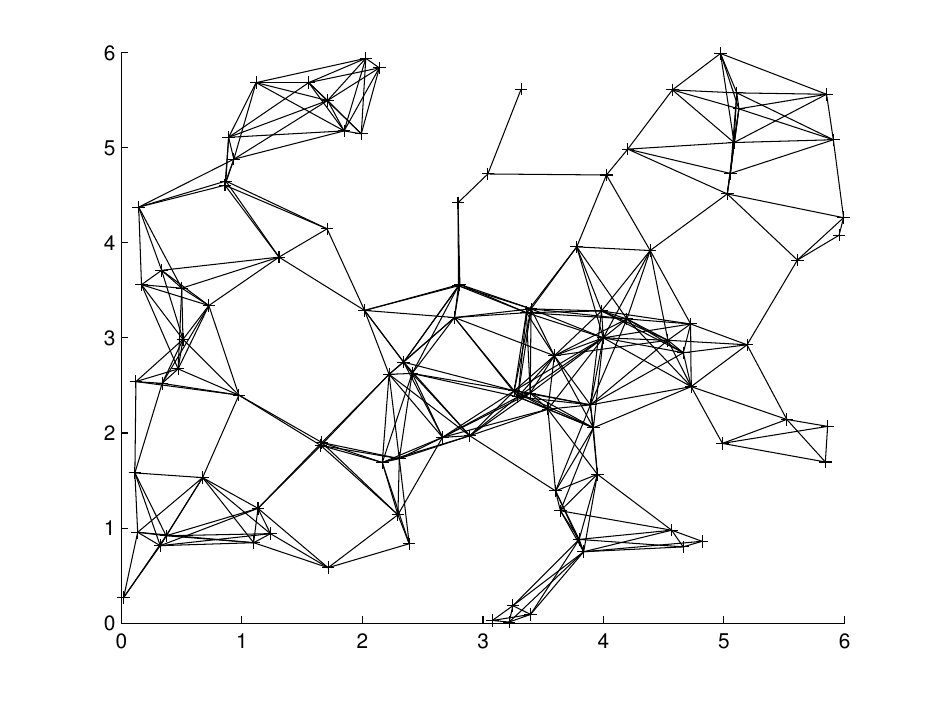}
  \caption{Percolation regimes}
\label{fig_regimes}
\end{figure}

The percolation regimes are defined with the use of the Bachman-Landau
notations that we define here. For non-negative functions $f$ and $g$
we write as $n$ tends to infinity:
\begin{itemize}
\item $f(n)=o(g(n))$ if for every $\varepsilon >0$ there exists $M$
  such that for $n \geq M$, we have $f(n)\leq \varepsilon
  g(n)$. We say that $f$ is dominated by $g$ asymptotically.
\item $f(n)=O(g(n))$ if there exists $k>0$ and $M$ such that for $n
  \geq M$, we have $f(n) \leq k g(n)$. We say that $f$ is
  bounded by $g$ asymptotically.
\item $f(n) \sim g(n)$ if $f(n)=O(g(n))$ and $g(n)=O(f(n))$. We say
  that $f$ and $g$ are equal asymptotically.
\item $f(n) \ll g(n)$ if $\frac{f(n)}{g(n)}=o(1)$. We say that $f$ is
  small compared to $g$ asymptotically. 
\end{itemize}

\subsection{Subcritical regime}
In this subsection, we consider that $\theta=o(\frac{1}{n})$. In the
subcritical regime, the Vietoris-Rips complex is composed of various
disconnected components. We can see an example of a Vietoris-Rips
complex in the subcritical regime in Figure~\ref{fig_subcrit}.

\begin{figure}[h]
  \centering
    \includegraphics[width=8.5cm]{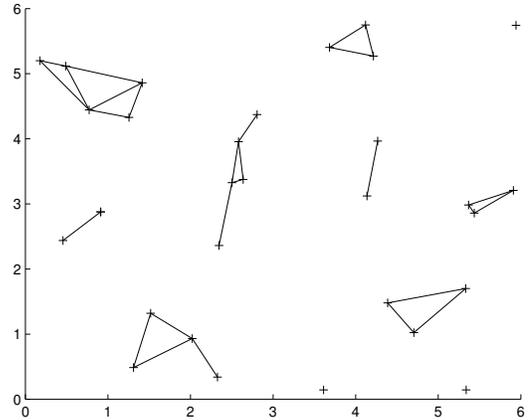}
  \caption{Subcritical regime $\theta=o(\frac{1}{n})$}
\label{fig_subcrit}
\end{figure}

Therefore the size of the largest simplex $K$
is expected to be rather small, $n$-simplices do not exist in this
regime. That is why we focus on the number of
$k$-simplices $s_k$ for $k \geq 1$ small compared to $n$, i.e. $k \ll n$.

We are able to derivate approximations from Theorem
\ref{theorem_chimean}:
\begin{lemma}
For $k \geq 1$ and $k \ll n$ in the subcritical regime,
\begin{eqnarray*}
\E{s_k} &\sim& \frac{n^{k+1}}{(k+1)!} \theta^{k}{(k+1)}^d\\
\var{s_k} &\sim& \frac{n^{k+1}}{(k+1)!} \theta^{k} {(k+1)}^d.
\end{eqnarray*}
\label{lemma_subcrit}
\end{lemma}
\begin{proof}
This is a direct consequence of the subcritical regime hypothesis applied
to Equations~\ref{chimean_e} and~\ref{chimean_var}.
\end{proof}

Meanwhile, in the subcritical regime, the random
geometric graph shares similar properties with the Erd\"os-R\'enyi
model, that is the graph with $n$ points where each edge is chosen
independently with probability $p$. The clique number of the
Erd\"os-R\'enyi graph has been studied first in
\cite{matula_complete_1970}, and its almost sure behavior has been
described in~\cite{bollobas_cliques_1976}: 
\begin{theorem}[\cite{bollobas_cliques_1976}]
Let us define $n_k \sim p^{-k/2}$ and $n_k' \sim (1+\frac{3\log
  k}{k})p^{-k/2}$.
When $n$ goes to infinity, with a fixed probability $p$, for almost
every graph, there is a constant $c$ such that if $n_k'\leq n \leq
n_{k+1}$ for some $k>c$, then the clique number is $K=k$.
\end{theorem}

 That means that the size of the largest simplex $K$ grows
slowly step by step as the number of points $n$ goes to infinity.
We find similar results for the Vietoris-Rips complex:
\begin{theorem}
Let $k$ be a non zero integer and $\eta$ a strictly positive real
number. We then define:
\begin{eqnarray*}
\theta_k'&=&{\left(\frac{(k+1)! {(k+1)}^{1+\eta-d}}{n^{k+1}}\right)}^{1/k}\\
\theta_k&=&{\left(\frac{(k+1)!{(k+1)}^{-(1+\eta+d)}}{n^{k+1}}\right)}^{1/k}.
\end{eqnarray*}
Then for $k \ll n$ and $\theta_{k}'<\theta<\theta_{k+1}$ in the
subcritical regime, the size of the largest simplex in a Vietoris-Rips
complex based on a binomial point process of $n$ points on the torus
$\mathbb{T}^d_a$ is asymptotically almost surely: 
\begin{eqnarray*}
K=k.
\end{eqnarray*}
\label{thm_sub}
\end{theorem}
\begin{proof}
For $\theta>\theta_k'$, thanks to the approximations of Lemma
\ref{lemma_subcrit} we have:
\begin{eqnarray*}
\E{s_k} \geq  \frac{n^{k+1}}{(k+1)!} \theta_k'^{k}{(k+1)}^d
 = {(k+1)}^{1+\eta}.
\end{eqnarray*}
And for  $\theta<\theta_k$,
\begin{eqnarray*}
\E{s_k} \leq  n^{k+1} \theta_k^{k}{(k+1)}^d
 = {(k+1)}^{-(1+\eta)}.
\end{eqnarray*}

Moreover, for $k \ll n$ when $n$ goes to infinity, we consider $n$
large enough so that $n>k! k^{k(1+2\eta)}$. Since $k^k>{(k+1)}^{k-1}$ for
all $k\geq 1$, we have: 
\begin{eqnarray*}
 && n>k^{k(1+2\eta)}k! \\
&\Rightarrow& n > k^{k(1+\eta-d)}k^{k(\eta+d)}k!\\
&\Rightarrow& n > k^{k(1+\eta-d)}{(k+1)}^{(k-1)(\eta+d)}k!\\
&\Rightarrow& n > k^{k(1+\eta-d)}{(k+1)}^{(k-1)(1+\eta+d)}\frac{1}{{(k+1)}^{k-1}}k!\\
&\Rightarrow& n > k^{k(1+\eta-d)}{(k+1)}^{(k-1)(1+\eta+d)}\frac{{(k!)}^k}{{((k+1)!)}^{k-1}}\\
&\Rightarrow& n^{\frac{1}{k(k-1)}} >\frac{{(k^{1+\eta-d}k!)}^{1/(k-1)}}{{({(k+1)}^{-(1+\eta+d)}(k+1)!)}^{1/k}}\\
&\Rightarrow& \frac{n^{\frac{k}{k-1}}}{n^{\frac{k+1}{k}}}> 
              \frac{{(k^{1+\eta-d}k!)}^{1/(k-1)}}{{({(k+1)}^{-(1+\eta+d)}(k+1)!)}^{1/k}}\\
&\Rightarrow&\theta_k>\theta_{k-1}'.
\end{eqnarray*}

We can now consider $\theta$ such that
$\theta_k'<\theta<\theta_{k+1}$.

On the one hand, we find an upper bound for the probability of
the non-existence of $k$-simplices:
\begin{eqnarray*}
\P[s_k=0,\theta>\theta_k'] \leq \frac{\var{s_k}}{\E{s_k}^2}
\sim \frac{1}{\E{s_k}}
\leq \frac{1}{{(k+1)}^{1+\eta}}.
\end{eqnarray*}

On the other hand, we can find an upper bound for the probability of existence
of $(k+1)$-simplices:
\begin{eqnarray*}
\P[s_{k+1}>0, \theta<\theta_{k+1}] \leq \E{s_{k+1}}
\leq \frac{1}{{(k+2)}^{1+\eta}}.
\end{eqnarray*}

Finally we have: 
\begin{align*}
\P[\exists \theta, \theta_k'<\theta<\theta_{k+1}, K&\neq k]\\
&<\frac{1}{{(k+1)}^{1+\eta}}+\frac{1}{{(k+2)}^{1+\eta}}.
\end{align*}

 As the sum $\sum_{k=1}^{\infty}k^{-(1+\eta)}$ converges, the
 Borel-Cantelli theorem implies that with the exception of finitely
 many $k$'s, for all $\theta$ such that
 $\theta_k'<\theta<\theta_{k+1}$, one has $K=k$. Then when $n$ goes to
 infinity, we have asymptotically almost surely that $K=k$ as $n$ goes
 to infinity:
\begin{eqnarray*}
\P[K= k, \theta_k'<\theta<\theta_{k+1}] \xrightarrow{n \rightarrow
  \infty} 1,
\end{eqnarray*}
concluding the proof.
\end{proof}

Finally we can conclude on the complexity of the reduction algorithm
in this regime:
\begin{theorem}
  The reduction algorithm described in Alg.~\ref{alg_ra} for a
  Vietoris-Rips complex based on a binomial point process of $n$
  points on the torus $\mathbb{T}^d_a$ in the subcritical regime such
  that $\theta=o(\frac{1}{n})$, has a complexity in:
  \begin{eqnarray*}
    O(n^{3k_0+7}).
  \end{eqnarray*}
\end{theorem}
\begin{proof}
  This is a direct consequence of Corollary~\ref{corol_comp} and
  Theorem~\ref{thm_sub} as the first part of the complexity becomes
  preponderent. 
\end{proof}

\subsection{Critical regime}
We now consider that $\theta \sim \frac{1}{n}$ so that the abstract
simplicial complex is in the critical regime. In this regime,
percolation occurs: the disconnected components begin to connect into
one sole connected component. We can see an example of a Vietoris-Rips
complex in the critical regime in Figure~\ref{fig_crit}.

\begin{figure}[H]
  \centering
    \includegraphics[width=8.5cm]{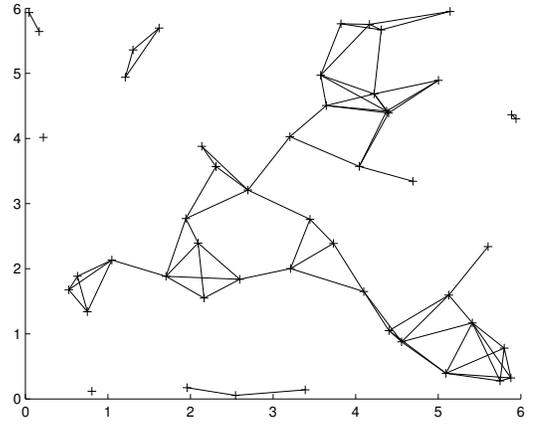}
  \caption{Critical regime $\theta \sim \frac{1}{n}$}
\label{fig_crit}
\end{figure}

The size of the largest simplex $K$ is still rather small compared to
$n$ as $n$ goes to infinity, $n$-simplices still do not exist in this
regime. This allows us to focus on the number of $k$-simplices $s_k$
for $k \geq 1$ bounded by $n$ asymptotically, i.e. $k =O(n)$.

We are able to derivate new approximations from Theorem
\ref{theorem_chimean} in this regime:
\begin{lemma}
For $k \geq 1$ and $k =O(n)$ in the critical regime,
\begin{eqnarray*}
\E{s_k} &\sim& \frac{1}{\sqrt{2\pi}}n{(k+1)}^{d-k+\frac{1}{2}}\\
\var{s_k} &\sim& \frac{1}{\sqrt{2\pi}}n{(k+1)}^{d-k+\frac{1}{2}}.
\end{eqnarray*}
\label{lemma_crit}
\end{lemma}
\begin{proof}
The expectation approximation is a direct consequence of Equation
\ref{chimean_e}, the critical regime approximation $\theta \sim
\frac{1}{n}$ and Stirling's approximation: $n! \sim
\sqrt{2 \pi n}{\left( \frac{n}{e} \right)}^n$.

Then, since $k=O(n)$, we can approximate the variance by its
dominating term in Equation~\ref{chimean_var} that is $i=k$ that leads
to $\var{s_k}\sim \E{s_k}$.
\end{proof}

We then derive from these approximations, the almost sure asymptotical
behavior of the size of the largest simplex $K$:
\begin{theorem}
In the critical regime, the size of the largest simplex in a
Vietoris-Rips complex based on a binomial point process of $n$ points
on the torus $\mathbb{T}^d_a$, grows asymptotically almost surely
slower than $\ln n$ with an arbitrary small distance. That means that
for all $\eta$ strictly positive real number:  
\begin{eqnarray*}
 {(\ln n)}^{1-\eta} < K < \ln n.
\end{eqnarray*}
\label{thm_crit}
\end{theorem}
\begin{proof}
First for $k>\ln n$, we find an upper bound for the expectation of the
number of $k$-simplices thanks to the approximation of Lemma
\ref{lemma_crit}:
\begin{eqnarray*}
\E{s_k}<\frac{1}{\sqrt{2\pi}}n{(\ln n+1)}^{d+\frac{1}{2}-\ln n}.
\end{eqnarray*}
One can easily check that this upper bound tends to $0$ as $n$ goes to
infinity. Then, since $\P[s_k>0] \leq \E{s_k}$, the probability that there exists
$k$-simplices tends to $0$:
\begin{eqnarray*}
\P[K>k]=\P[s_k>0] \xrightarrow{n \rightarrow \infty} 0 \quad \forall k>\ln n,
\end{eqnarray*}
and $K<\ln n$ asymptotically almost surely. 

On the other hand, for $k<{(\ln n)}^{1-\eta}$ for all $\eta>0$, we find
a lower bound for the expectation of the number of $k$-simplices: 
\begin{eqnarray*}
\E{s_k}>\frac{1}{\sqrt{2\pi}}  n{({(\ln n)}^{1-\eta}+1)}^{d+\frac{1}{2}-{(\ln n)}^{1-\eta}}. 
\end{eqnarray*}
This lower bound tends to infinity as $n$ grows. Thanks to the
asymptotic equivalence of the variance and the expectation of the
number of $k$-simplices, we have that $\P[s_k=0] \leq
\frac{1}{\E{s_k}}$, and the probability that there exists no
$k$-simplices tends to infinity:
\begin{eqnarray*}
\P[K<k]=\P[s_k=0] \xrightarrow{n \rightarrow \infty} 0 \quad \forall k<{(\ln n)}^{1-\eta},
\end{eqnarray*}
and $K>{(\ln n)}^{1-\eta}$ asymptotically almost surely. 
\end{proof}

We can now derive the complexity of the reduction algorithm in the
critical regime:
\begin{theorem}
  The reduction algorithm described in Alg.~\ref{alg_ra} for a
  Vietoris-Rips complex based on a binomial point process of $n$
  points on the torus $\mathbb{T}^d_a$ in the critical regime such
  that $\theta\sim \frac{1}{n}$ has a complexity in:
  \begin{eqnarray*}
    O(n^{\ln n}).
  \end{eqnarray*}
\end{theorem}
\begin{proof}
  The complexity from Corollary~\ref{corol_comp} is
  $O(n^{3k_0+7}+n^{k_0+3}\sum_{k=k_0+1}^K\binom{n}{k+1})$. Then
  Theorem~\ref{thm_crit} gives us an approximation for $K$.

Since $\ln n < \frac{n}{2}$ for every $n\geq 2$, the preponderent term
of the sum is $\binom{n}{\ln n}$ that can only be upper bounded by
$n^{\ln n}$.  
\end{proof}

\subsection{Supercritical regime}
In the supercritical regime, we have that
$\frac{1}{n}=o(\theta)$. Percolation has occurred: the Vietoris-Rips 
complex is now connected and tends to become the complete 
complex, I.e.\ the complex with all simplices by analogy with the
complete graph. We can see an instance of a Vietoris-Rips complex in
this regime in Figure~\ref{fig_supercrit}.

\begin{figure}[h]
  \centering
    \includegraphics[width=8.5cm]{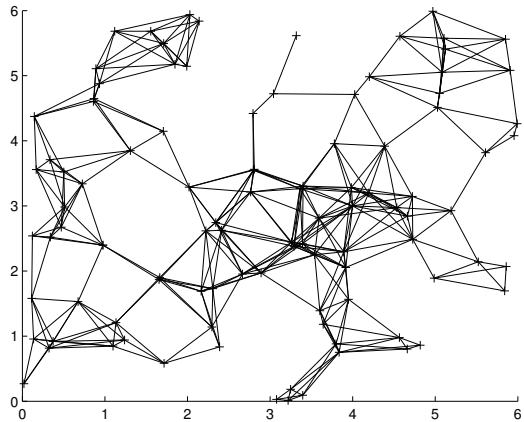}
  \caption{Supercritical regime $\frac{1}{n}=o(\theta)$}
\label{fig_supercrit}
\end{figure}

In the supercritical regime, it is no more possible to obtain
approximations from the exact formulas of Theorem
\ref{theorem_chimean} since $K$ becomes equivalent to $n$ as they
tend to infinity.  

However, the behavior of the size of the largest simplex for a
binomial point process has already be studied by Appel and Russo in
\cite{appel_maximum_1997} as the clique number of a 
random geometric graph.  They first find the almost sure asymptotic
rate for the maximum vertex degree. Then by squeezing the clique
number between two values of the maximum vertex degree, they obtain
its asymptotic behavior. We propose here an alternative approach. 

First, we state a fact true in any percolation regime:
\begin{lemma}
  In a Vietoris-Rips complex based on a binomial point process of $n$
  points we always have that:
  \begin{eqnarray*}
    n \leq (K+1)\lceil \frac{1}{\theta} \rceil.
  \end{eqnarray*}
\label{lemma_super}
\end{lemma}
\begin{proof}
  Let us consider a lattice square grid of spacing $r$, the parameter
  of the Vietoris-Rips complex, on the torus $\mathbb{T}^d_a$. Then
  the number of little squares of side $r$ is:
  \begin{eqnarray*}
    \lceil {\left( \frac{a}{r} \right)}^d \rceil = \lceil \frac{1}{\theta} \rceil.
  \end{eqnarray*}
 
All points that lie inside a same little square of side $r$ are
connected to each other, and are in the same simplex, by definition of
the Vietoris-Rips complex. The number $K$ is the size of the largest
simplex, so there are at most $K+1$ points in the same square.

The sum on all the squares concludes the proof.
\end{proof}

We can now write the main theorem for the behavior of $K$ in the
supercritical regime:
\begin{theorem}
In the supercritical regime, the size of the largest simplex in a
Vietoris-Rips complex based on a binomial point process of $n$ points
on the torus $\mathbb{T}^d_a$ grows asymptotically almost surely as
$n\theta$: 
\begin{eqnarray*}
  K \sim n\theta.
\end{eqnarray*}
\label{thm_supercrit}
\end{theorem}
\begin{proof}
  On the one hand, we know from Lemma~\ref{lemma_super} that
$n \leq (K+1)\lceil \frac{1}{\theta} \rceil $.
Then as $\lceil\frac{1}{\theta}\rceil \leq \frac{1}{\theta}+1$, we
have that: 
\begin{eqnarray*}
  K\geq \frac{n\theta}{1+\theta}.
\end{eqnarray*}
As $n$ goes to infinity, $\theta$ tends to $0$. Then we have
asymptotically almost surely that:
\begin{eqnarray*}
  K\geq n\theta.
\end{eqnarray*}

On the other hand, by definition of the Vietoris-Rips complex, a
$k$-simplex occurs when $k+1$ points are in the same ball of diameter
$r$. Without loss of generality, we can center the ball on one of the
point. Then we can write:
\begin{eqnarray*}
  \P[K>n\theta] &=&  \P[s_{n\theta}>0]\\
&=& \P[\exists \ x, |B(x,\frac{r}{2})| \geq n\theta],\\
\end{eqnarray*}
where $x$ is a point of the binomial point process, and 
$| B(x,\frac{r}{2})|$ counts the number of points of the process that
lie in the ball $B(x,\frac{r}{2})$ centered in $x$ of radius $\frac{r}{2}$.

Let $x_1,\dots, x_n$ denote the $n$ points of the binomial point
process, 
\begin{eqnarray*}
  \P[K>n\theta] 
&=&\P[\exists \ 1\leq i \leq n, | B(x_i,\frac{r}{2}) | \geq n\theta]\\
&\leq& \P[\bigcup_{i=1}^n | B(x_i,\frac{r}{2}) | \geq n\theta]\\
&\leq& \sum_{i=1}^n \P[| B(x_i,\frac{r}{2})|\geq n\theta].
\end{eqnarray*}
Then by stationarity of the binomial point process, we have:
\begin{eqnarray*}
 \P[K>n\theta]   
&\leq& \sum_{i=1}^n \P[| B(x_i,\frac{r}{2})|\geq n\theta]\\
&\leq& n \P[|B(x_1,\frac{r}{2})|\geq n\theta].
\end{eqnarray*}

The number of points in the ball $B(x_1,\frac{r}{2})$ follows a binomial
distribution of $n-1$ points and of probability $\theta$: 
$\text{Binom}(n-1,\theta)$. Therefore Hoeffding's inequality implies
that:
\begin{eqnarray*}
  \P[K>n\theta]   
&\leq& n \P[|B(x_1,\frac{r}{2})|\geq n\theta]\\
&\leq& n \P[|B(x_1,\frac{r}{2})|\geq (n-1)(\theta+\frac{\theta}{n-1})]\\
&\leq& n \exp{(-2\frac{\theta^2}{n-1})}.
\end{eqnarray*}
As this last upper bound tends to $0$ as $n$ tends to infinity, we
have asymptotically almost surely:
\begin{eqnarray*}
  K\leq n\theta.
\end{eqnarray*}
\end{proof}

We are now able to derive the complexity of the reduction algorithm in
the supercritical regime:
\begin{theorem}
  The reduction algorithm described in Alg.~\ref{alg_ra} for a
  Vietoris-Rips complex based on a binomial point process of $n$
  points on the torus $\mathbb{T}^d_a$ in the supercritical regime
  such that $\frac{1}{n}=o(\theta)$ has a complexity in:
  \begin{eqnarray*}
    O(n^{k_0+3}2^n).
  \end{eqnarray*}
\end{theorem}
\begin{proof}
This is Corollary~\ref{corol_comp_max}.
\end{proof}
We can see that in this regime, we were not able to improve the
complexity via the behavior of $K$.

\section{Conclusion}
\label{sec_ccl}
In this paper, we have presented a reduction algorithm for abstract
simplicial complexes. We apply this algorithm to
Vietoris-Rips complexes that can represent wireless networks and their
topology, it then provides a solution for energy saving in redundant
wireless networks. 

We have proved that our reduction algorithm maintains the complex's
homology, works in an optimal order for computation's complexity,
and reaches an optimal solution. Finally we have investigated its
complexity depending on the size of the input, and in a second
approach depending only on the number of points.

\bibliographystyle{IEEEtran}
\bibliography{cliquebib}

\begin{IEEEbiography}[{\includegraphics[width=1in,height=1.25in,clip,keepaspectratio]
{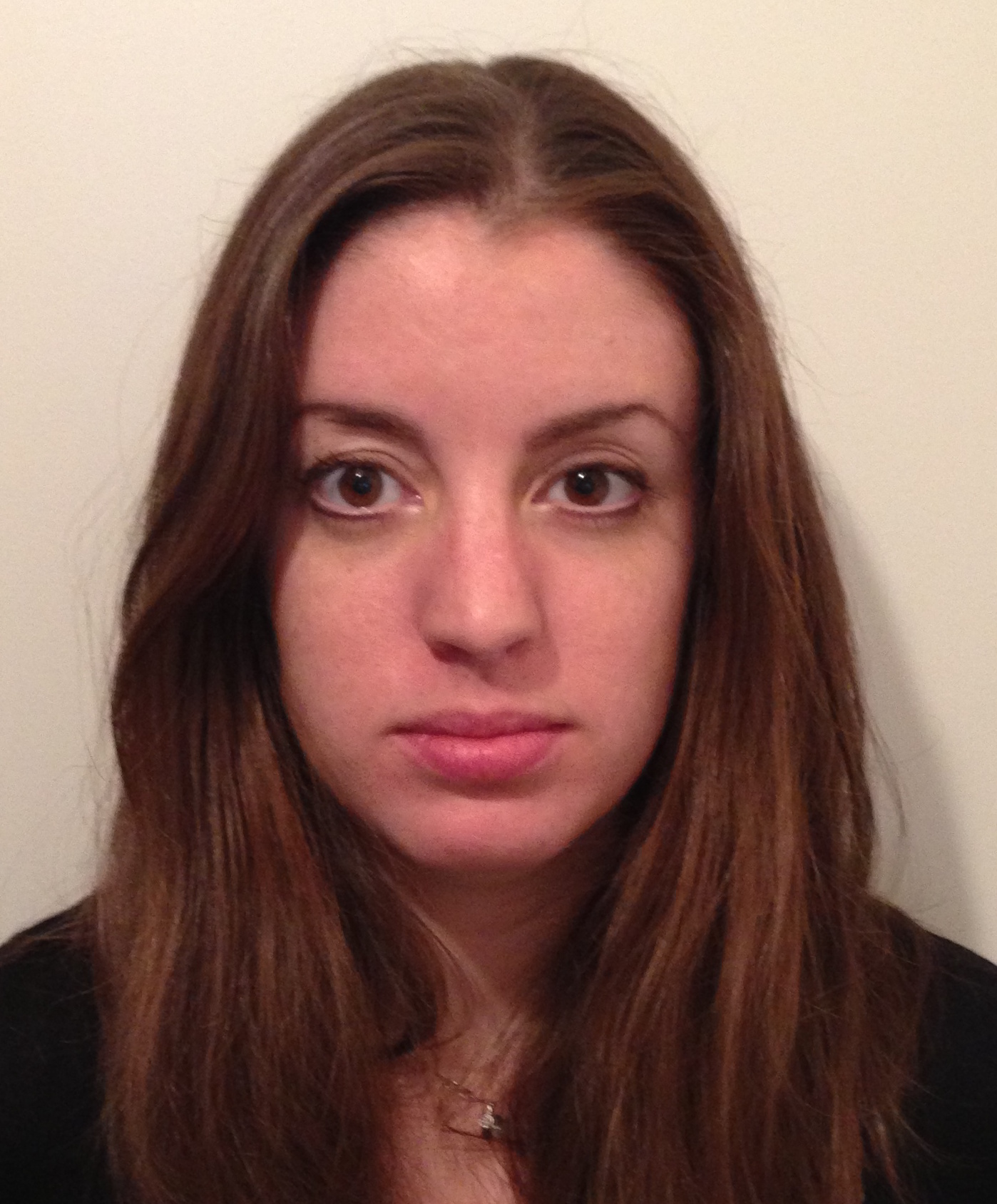}}]{Ana\"\i s Vergne}
received the Dipl.Ing.\ degree in telecommunications from T\'el\'ecom
ParisTech, Paris, France in 2010. She obtained the Ph.D. degree in
networking and computer sciences in 2013 also from T\'el\'ecom ParisTech,
Paris, France. She is currently an associate professor in the Network
and Computer Science Department at T\'el\'ecom ParisTech. Her research
interests include simplicial homology, algebraic topology, stochastic
geometry, and their applications to wireless sensor networks and
cellular networks. 
\end{IEEEbiography}

\begin{IEEEbiography}[{\includegraphics[width=1in,height=1.25in,clip,keepaspectratio]{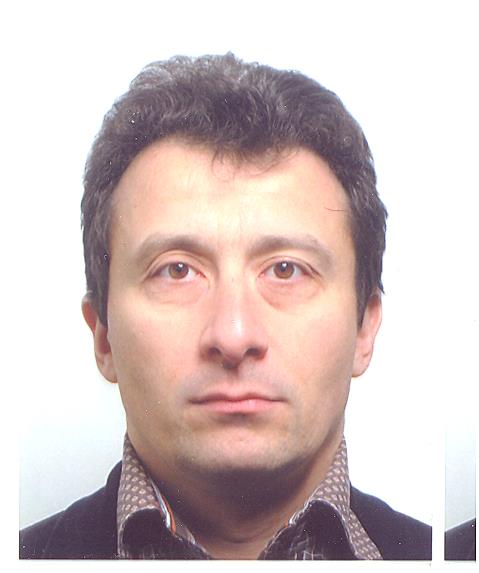}}]{Laurent Decreusefond}
is a former student of Ecole Normale Supérieure de Cachan. He obtained
his Ph.D. degree in Mathematics in 1994 from T\'el\'ecom ParisTech and his
Habilitation in 2001. He is currently a Professor in the Network and
Computer Science Department, at T\'el\'ecom ParisTech. His main fields of
interest are the Malliavin calculus, the stochastic analysis of long
range dependent processes, random geometry and topology and their
applications. With P. Moyal, he co-authored a book about the
stochastic modeling of telecommunication. 
\end{IEEEbiography}

\begin{IEEEbiography}[{\includegraphics[width=1in,height=1.25in,clip,keepaspectratio]{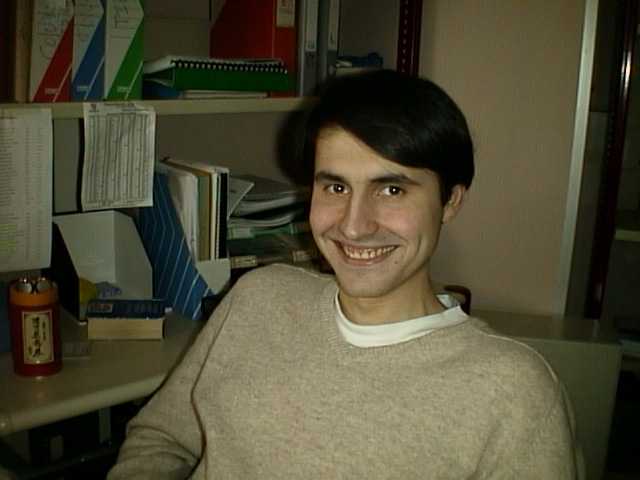}}]{Philippe Martins}
received a M.S. degree in signal processing and another M.S. degree in
networking and computer science from Orsay University and ESIGETEL
France, in 1996. He received the Ph.D. degree in electrical
engineering from T\'el\'ecom ParisTech, Paris, France, in 2000. He is
currently a Professor in the Network and Computer Science Department,
at T\'el\'ecom ParisTech. His main research interests lie in performance
evaluation in wireless networks (RRM, scheduling, handover algorithms,
radio metrology). His current investigations address mainly three
issues: a/ the design of distributed sensing algorithms for cognitive
radio b/ distributed coverage holes detection in wireless sensor
networks c/ the definition of analytical models for the planning and
the dimensioning of cellular systems. He has published several papers
on different international journals and conferences. He is also an
IEEE senior member and he is co-author of several books on 3G and 4G
systems. 
\end{IEEEbiography}

\end{document}